\documentclass[10pt,oneside,leqno]{amsart}
\usepackage{amsxtra}
\usepackage{amsopn}
\usepackage{color}
\usepackage{amsmath,amsthm,amssymb}
\usepackage{amscd}
\usepackage{amsfonts}
\usepackage{latexsym}
\usepackage{verbatim}

\theoremstyle{plain}
\newtheorem{theorem}{Theorem}[section]
\newtheorem{definition}[theorem]{Definition}
\newtheorem{lemma}[theorem]{Lemma}
\newtheorem{proposition}[theorem]{Proposition}
\newtheorem{corollary}[theorem]{Corollary}
\newtheorem{remark}[theorem]{Remark}
\newtheorem*{example}{Example}
\newtheorem{question}[theorem]{QUESTION}
\newtheorem{remark-question}[section]{Remark-Question}
\newtheorem{conjecture}[section]{Conjecture}

\newcommand\R{{\mathbb R}}



\newcommand\frg{{\mathfrak g}}


\sloppy

\parindent 0pt

\begin{document}

\title{$G_2$-structures
on Einstein solvmanifolds}
\author{Marisa Fern\'andez, Anna Fino and V\'ictor Manero}

\subjclass[2010]{Primary 53C25, 53C38; Secondary   22E25, 53C55}
\keywords{Calibrated $G_2$-structures, Cocalibrated $G_2$-structures,
Einstein metrics, Ricci-solitons, K\"ahler-Einstein metrics,
solvable Lie groups.}
\date{\today}

\maketitle


\begin{abstract} We study  the $G_2$  analogue of the Goldberg
conjecture on  non-compact  solvmanifolds.
In contrast to the almost-K\"ahler  case we prove   that a  7-dimensional
solvmanifold cannot admit any left-invariant
calibrated $G_2$-structure $\varphi$   such that   the induced metric $g_{\varphi}$  is Einstein, unless $g_{\varphi}$ is flat. We give an example of 7-dimensional solvmanifold admitting a left-invariant  calibrated $G_2$-structure $\varphi$ such that $g_{\varphi}$ is Ricci-soliton.  Moreover, we show that a $7$-dimensional (non-flat) Einstein solvmanifold $(S,g)$
cannot admit any left-invariant cocalibrated $G_2$-structure  $\varphi$ such that the induced metric $g_{\varphi} = g$.
\end{abstract}

\bigskip


\section{Introduction}

A $7$-dimensional smooth manifold $M^7$  is said to admit a $G_2$--structure if there is a reduction of the structure group of its frame bundle from $GL(7,\R)$  to the  exceptional Lie group $G_2$ which can actually be viewed naturally as a subgroup of $SO(7)$. Therefore a   $G_2$-structure determines a Riemannian metric and an orientation. In fact, one can prove  that the presence of a $G_2$-structure is equivalent to the existence of a certain
type of  a non-degenerate $3$-form $\varphi$  on the manifold.  By \cite{FG} a   manifold $M^7$  with a $G_2$-structure comes equipped with a  Riemannian metric $g$, a cross product $P$, a 3-form $\varphi$, and orientation, which satisfy the relation
$$
\varphi(X,Y,Z) = g ( P (X, Y), Z),
$$
for every vector field $X, Y, Z$.

This is exactly analogue  to the data of an almost Hermitian manifold, which comes with a  Riemannian metric, an almost complex structure $J$, a 2-form  $F$, and an orientation, which satisfy the relation $F (X, Y) = g (JX, Y)$.

 Whenever this 3-form $\varphi$  is covariantly constant
with respect to the Levi-Civita connection then the holonomy group is contained
in $G_2$ and the $3$-form $\varphi$ is closed and co-closed.

A $G_2$-structure is called {\it calibrated} if the $3$-form  $\varphi$ is closed and it  can be viewed as the  $G_2$ analogous of an  almost-K\"aher structure in  almost Hermitian geometry. By the results  in \cite{Bryant,CI} no compact $7$-dimensional manifold $M^7$ can support a calibrated $G_2$-structure $\varphi$ whose
underlying metric $g_{\varphi}$  is Einstein unless $g_{\varphi}$ has holomomy contained in $G_2$. This could be considered to be a $G_2$  analogue of the Goldberg
conjecture in almost-K\"ahler geometry. The result was generalized by R.L. Bryant to calibrated
$G_2$-structures with too tightly pinched Ricci tensor and by R. Cleyton and S. Ivanov  to calibrated $G_2$-structures with  divergence-free
Weyl tensor.

A non-compact complete Einstein (non-K\"ahler)
almost-K\"ahler manifold with negative scalar curvature  was constructed  in \cite{ADM} and  in \cite{HOS} it was shown that  it  is  an  almost-K\"ahler solvmanifold, that is,  a simply connected solvable Lie group $S$ endowed with a left-invariant almost-K\"ahler structure \cite{HOS}. In Section \ref{AKdim6} we show that in dimension six  this is the unique example of Einstein almost-K\"ahler (non-K\"ahler) solvmanifold and we  classify the  $6$-dimensional solvmanifolds admitting a left-invariant  (non-flat) K\"ahler-Einstein structure.

A natural problem is then to study the existence of calibrated  $G_2$-structures   inducing  Einstein metrics on non-compact homogeneous Einstein manifolds.
All the known examples of non-compact homogeneous Einstein manifolds belong
to the class of solvmanifolds, that is,  they are simply connected solvable Lie groups $S$ endowed with a left invariant metric (see  for instance the survey \cite{Lauret1}).  A left-invariant metric on a Lie group $S$  will be always
identified with the inner product  $\langle \cdot, \cdot \rangle$  determined on the Lie algebra $\mathfrak s$ of $S$. According to a long
standing conjecture attributed to D. Alekseevskii (see \cite[7.57]{Besse}), these might
exhaust the class of non-compact  homogeneous Einstein manifolds.

On the other hand,  Lauret in \cite{Lauret2} showed that  the  Einstein solvmanifolds  are \emph{standard}, i.e. satisfy
the following additional condition: if $\mathfrak s = \mathfrak a  \oplus \mathfrak n$   is the orthogonal decomposition
of the Lie algebra $\mathfrak s$  of $S$ with  $\mathfrak n = [ \mathfrak s, \mathfrak s]$,  then $\mathfrak a$ is abelian.

A  left-invariant Ricci-flat metric on a solvmanifold is necessarly flat \cite{AK}, but solvmanifolds can  admit    incomplete metrics with holonomy  contained in $G_2$  as shown in   \cite{GLPS,CF}.

In Section \ref{sectioncalibratedstruct}   by using the classification of $7$-dimensional Einstein solvmanifolds and some obstructions to the existence of calibrated $G_2$-structures, in contrast to the almost-K\"ahler  case,  we prove   that a  7-dimensional  solvmanifold cannot admit any left-invariant
calibrated $G_2$-structure $\varphi$   such that   the induced metric $g_{\varphi}$  is Einstein, unless $g_{\varphi}$ is flat.

If $\varphi$ is co-closed, then the $G_2$-structure is called {\it cocalibrated}.   In Section  \ref{sectioncocalibratedstruct}  we show that a $7$-dimensional (non-flat) Einstein solvmanifold $(S,g)$
cannot admit any left-invariant cocalibrated $G_2$-structure  $\varphi$ such that the induced metric $g_{\varphi} = g$.


\section{Preliminaries on Einstein solvmanifolds}

%

%

By  \cite{Lauret2}  all  the  Einstein  solvmanifolds  are   standard.
Standard Einstein solvmanifolds constitute a distinguished class that has been
deeply studied  by J. Heber, who has obtained  many remarkable structural and
uniqueness results, by assuming only the standard condition (see \cite{Heber}). In contrast to the compact case, a standard Einstein
metric is unique up to isometry and scaling among left-invariant  metrics  \cite[Theorem E]{Heber}.  The study of standard Einstein solvmanifolds  can be reduced  to the rank-one case, that
is,  to the ones  with $\dim {\mathfrak a}  = 1$ (see \cite[Sections 4.5,4.6]{Heber}) and everything is determined by the nilpotent Lie algebra ${\mathfrak n} = [{\mathfrak s}, {\mathfrak s}].$ Indeed, a nilpotent
Lie algebra ${\mathfrak n}$  is the nilradical of a rank-one Einstein solvmanifold if and only if $\mathfrak n$ admits a nilsoliton metric (also called a minimal metric), meaning that its Ricci
operator is a multiple of the identity modulo a derivation of ${\mathfrak n}$.

Any standard Einstein solvmanifold is isometric to a solvmanifold whose
underlying metric Lie algebra resembles an Iwasawa subalgebra of a semisimple
Lie algebra in the sense that $ad_A$  is symmetric and nonzero for any $A \in \mathfrak a$, $A \neq 0$.
Moreover, if $H$ denotes the mean curvature vector of  $S$ (i.e.,  the only element $H \in {\mathfrak a}$  such that ${\rm {tr}} (ad_A) = \langle A, H\rangle$, for every $A \in {\mathfrak a}$),
 then the
eigenvalues of  $ad_H \vert_{\mathfrak n}$ are all positive integers without a common divisor, say
 $k_1 < \ldots  < k_r$. If  $d_1, \ldots, d_r$  denote the corresponding multiplicities, then the tuple
 $$
(k; d) = (k_1 < \ldots  < k_r; d_1, \ldots, d_r)
 $$
is called the \emph{eigenvalue type} of S. It turns out that $\R H \oplus {\mathfrak n}$  is also an Einstein
solvmanifold (with inner product  the restriction of $\langle \cdot, \cdot \rangle$ on it). It is thus enough to consider
 rank-one  (i.e. $\dim \mathfrak a =1$) metric solvable Lie algebras since  every higher rank Einstein
solvmanifold will correspond to a unique  rank-one  Einstein solvmanifold and to a certain
abelian subalgebra of derivations of $\mathfrak n$  containing $ad_H$.
In every dimension, only finitely many eigenvalue types occur.

 By \cite[Lemma 11]{Nikonorov}, \cite{A2}) and  \cite[Proposition 6.12]{Heber}  it follows that  if  $(\mathfrak s, \langle \cdot,  \cdot \rangle)$  is an  Einstein (non-flat) solvable Lie algebra, such
that $\dim  \mathfrak a= m$ and  $[\mathfrak s, \mathfrak s]$  is abelian, then
the eigenvalue type  is $(1; k)$, with   $k  = \dim  [\mathfrak s, \mathfrak s] \geq m$.

In the case that $\mathfrak n$ is non abelian, it is proved in \cite{Lauret3}  that any nilpotent Lie algebra of dimension $\leq 5$  admits an Einstein solvable
extension. In  \cite{Will}   it is shown that the same is true for any of the 34 nilpotent Lie algebras
of dimension 6, obtaining then a classification of all $7$-dimensional  rank-one Einstein solvmanifolds (see Table 2). A classification of  $6$ and $7$-dimensional Einstein solvmanifolds of higher rank can be obtained by \cite{Will2}, where more in general there is a study  of Ricci solitons up to dimension $7$ on solvmanifolds.
We recall that a Riemannian manifold $(M,g)$ is called Ricci soliton if the metric $g$ is such that
$Ric(g)=\lambda g+L_Xg$
for some $\lambda \in \mathbb{R}$, and $X \in \mathfrak{X}(M)$. Ricci solitons are called expanding,
steady, or shrinking depending on whether  $\lambda  < 0,  \lambda = 0,$  or  $\lambda  > 0.$
Any nontrivial homogeneous Ricci soliton must be non-compact, expanding and non-gradient (see for instance \cite{Lauret3}).  Up to now, all known examples are isometric to a left-invariant
metric  $g$  on a simply connected Lie group $G$ such that  \begin{equation} \label{solvsolitoneq} Ric(g) =  \lambda I + D,\end{equation}  for some  $\lambda \in \R$ and  some derivation $D$ of  the  Lie  algebra $\frak g$ of $G$. Conversely,
any left-invariant metric  $g$ which satisfies \eqref{solvsolitoneq}   is automatically a Ricci soliton. If $G$ is solvable, these metrics are also
called solvsolitons.


 \section{Almost-K\"ahler structures} \label{AKdim6}

An almost Hermitian manifold  $(M, J, g)$  is called an almost-K\" ahler manifold if
the corresponding K\"ahler form  $F( \cdot  ,  \cdot  ) = g( \cdot  , J  \cdot)$  is a closed $2$-form.
 In this section we study the existence of  Einstein almost-K\"ahler  structures $(J, g, F)$  on $6$-dimensional  solvmanifolds.

 Along all this work, the coefficient appearing in the rank-one  Einstein extension of a
 Lie algebra will be denoted by $a$
while the coefficients of the extension up to dimension 6 for almost-K\" ahler, and up to dimension 7 for $G_2$ manifolds,
will be denoted by $b_i$.

 \begin{theorem} A $6$-dimensional  solvmanifold  $(S, g)$ admits a left-invariant  Einstein (non-K\"ahler) almost-K\"ahler metric if and only if  its  Lie algebra $(\frak s,  g)$ is isometric to the rank-two Einstein solvable Lie algebra \eqref{AKEinstein}   defined below.

 A $6$-dimensional solvmanifold $(S, g)$ admits a left-invariant K\"ahler-Einstein structure  if and only if
 the Lie algebra $(\frak s, g)$ is isometric  either to  the rank-one Einstein solvable Lie algebra $\frak k_4$ or to the  rank-two  Einstein solvable Lie algebra  \eqref{rank2KE}
or  to the  rank-three  Einstein solvable Lie algebra \eqref{KEabelianrank3};
 both Lie algebras \eqref{rank2KE} and \eqref{KEabelianrank3} are given below.
 \end{theorem}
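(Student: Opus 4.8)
The plan is to reduce the classification to a computation over the known list of low-dimensional Einstein solvmanifolds, exactly as the paper sets up in Section~\ref{AKdim6}. The claim has two halves: the almost-K\"ahler (non-K\"ahler) case and the K\"ahler-Einstein case, and both proceed the same way. First I would invoke the results of Heber and Lauret recalled in Section~\ref{AKdim6}: any Einstein solvmanifold is standard, so $(\mathfrak s,\langle\cdot,\cdot\rangle)$ with $\dim\mathfrak s=6$ has an orthogonal decomposition $\mathfrak s=\mathfrak a\oplus\mathfrak n$ with $\mathfrak n=[\mathfrak s,\mathfrak s]$ nilpotent and $\mathfrak a$ abelian, and the whole metric Lie algebra is determined by its rank-one reduction, i.e.\ by the nilsoliton $\mathfrak n$ together with the symmetric derivation $\mathrm{ad}_H$. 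Since every nilpotent Lie algebra of dimension $\le 5$ and all $34$ of dimension $6$ admit an Einstein solvable extension (\cite{Lauret3,Will}), the candidate metric Lie algebras $\mathfrak s$ are, up to isometry and scaling, in bijection with the finitely many entries of the relevant tables. So the proof is: go through this finite list and decide, for each, whether it carries a left-invariant closed non-degenerate $2$-form $F$ with $g(\cdot,J\cdot)=F(\cdot,\cdot)$ giving the Einstein metric, and separately whether it carries an integrable such $J$.

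Next I would organize the search by eigenvalue type and the position of $\mathfrak a$. For the K\"ahler-Einstein part, integrability of $J$ plus $dF=0$ forces the metric to be K\"ahler-Einstein, so one can exploit strong structural constraints: a homogeneous K\"ahler-Einstein metric of negative scalar curvature on a solvmanifold must be (after the standard reduction) one of a very restricted type, and the holonomy/curvature conditions eliminate most nilradicals immediately --- in particular $\mathfrak n$ must be such that $\mathrm{ad}_H|_{\mathfrak n}$ and $J$ are compatible, which pins $\mathfrak n$ down to abelian (the rank-three case \eqref{KEabelianrank3}, the complex hyperbolic-type space), to the Heisenberg-type nilradical giving the rank-two case \eqref{rank2KE}, or to the specific nilpotent algebra underlying $\mathfrak k_4$. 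Concretely I would: (i) write the generic left-invariant almost-complex structure $J$ as a matrix in a basis adapted to $\mathfrak a\oplus\mathfrak n$ compatible with $g$; (ii) impose $d F=0$ using the structure constants of each $\mathfrak s$ from the table; (iii) for the K\"ahler part additionally impose the Nijenhuis tensor $N_J=0$; (iv) for each algebra that survives, check whether the resulting $(J,g,F)$ is non-K\"ahler (for the first statement) or K\"ahler (for the second), and identify it with \eqref{AKEinstein}, $\mathfrak k_4$, \eqref{rank2KE}, or \eqref{KEabelianrank3}. The forward direction --- that these listed algebras \emph{do} carry the asserted structures --- is settled by exhibiting $F$ explicitly, which for \eqref{AKEinstein} is the known Alekseevskii--Dotti--Miatello / Harada--Oguro--Sekigawa example.

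For the almost-K\"ahler (non-K\"ahler) direction the subtlety is that $J$ need not be integrable, so no holonomy argument is available and one genuinely must run the $dF=0$ analysis over the whole list. Here I would use two filters to cut the casework: first, a $6$-dimensional Einstein almost-K\"ahler solvmanifold is never Ricci-flat unless flat (by \cite{AK} Ricci-flat solv $\Rightarrow$ flat, and a flat almost-K\"ahler metric is excluded as ``non-K\"ahler'' would fail / or is listed separately), so we may assume negative scalar curvature; second, the almost-K\"ahler Einstein condition with $\mathrm{ad}_A$ symmetric imposes that the eigenvalue type and multiplicities of $\mathrm{ad}_H|_{\mathfrak n}$ be compatible with $J$ acting as an isometry that rotates eigenspaces, which for $\dim\mathfrak s=6$ is very restrictive and should leave essentially one family. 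The main obstacle, and where the real work lies, is step (ii)--(iii): the brute-force solution of the closedness (and Nijenhuis) equations across all $34+$ nilradicals and their possible higher-rank extensions. This is a finite but sizeable linear/quadratic algebra computation in the structure constants; the art is in choosing adapted bases (diagonalizing $\mathrm{ad}_H$, grading $\mathfrak n$ by $\mathrm{ad}_H$-eigenvalue) so that $dF=0$ becomes block-triangular and most entries of $J$ are forced to zero before the nonlinear Nijenhuis constraints even enter. Once the surviving algebras are isolated, matching them to the explicit normal forms \eqref{AKEinstein}, \eqref{rank2KE}, \eqref{KEabelianrank3} and $\mathfrak k_4$, and verifying uniqueness up to isometry via Heber's rigidity (\cite[Theorem~E]{Heber}), finishes the argument.
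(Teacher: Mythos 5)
Your proposal follows essentially the same route as the paper: reduce via standardness to the orthogonal decomposition $\mathfrak s=\mathfrak n\oplus\mathfrak a$, enumerate the Einstein solvable metric Lie algebras by rank (the rank-one list of Table 1 plus the parametrized higher-rank extensions obtained by imposing the Jacobi identity and the Einstein condition on the orthonormal basis), and then for each candidate solve $dF=0$, impose compatibility of $J$ with $g$, and check the Nijenhuis tensor. The paper's main computational shortcut is simply that for most rank-one algebras every closed $2$-form is degenerate (so no symplectic form exists), rather than the holonomy or eigenvalue-rotation filters you suggest, but the substance of the case-by-case verification is the same.
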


 \begin{proof} A   $6$-dimensional  Einstein solvable Lie algebra  $(\frak s, g)$ is necessarily standard, so one has the orthogonal decomposition (with respect to $g$)
 $$
 \frak s = \frak n  \oplus \frak  a,
 $$
 with  $\frak n = [\frak s, \frak s]$ nilpotent and $\frak a$  abelian. We will consider separately the different cases according to the rank of $\frak s$, i.e., to the dimension of $\frak a$.

 If $ \dim \frak a = 1$  and  $\frak n$ is abelian, then we know by \cite[Proposition 6.12]{Heber} that $\mathfrak s$ has structure equations
 $$
(a e^{16}, a e^{26}, a e^{36}, a e^{46}, a e^{56}, 0),
$$
where $a$ is a non-zero real number.  For this Lie algebra we get that any closed 2-form $F$ is degenerate, i.e. satisfies $F^3=0$ and so it does not admit
symplectic forms.

 If   $ \dim \frak a = 1$ but $\frak n$ is nilpotent (non-abelian), then  $(\frak s, g)$  is isometric  to one of the solvable Lie  algebras  $\frak k_i $ $(i = 1, \ldots, 8)$ defined below in Table 1, endowed with the  inner product  $g$  such that the basis $\{e_1, \ldots, e_6\}$ is orthonormal.

For $\frak k_1$, $\frak k_j$, $5 \leq j \leq 8,$  we get  again  that any  closed $2$-form  $F$ is  degenerate.

\medskip

The Lie algebras  $\frak k_2$ and $\frak k_3$ admit symplectic forms.
However, one can check  that
any almost complex structure $J$ on $\frak k_i$ $(i=2,3)$
is such that $g(\cdot , \cdot )\not=F(\cdot ,J \cdot)$.

For $\frak k_4$  we get that a  symplectic form is
$$F=\mu_{1} e^{12}+\mu_{2} e^{16}+\mu_{3} e^{26}+\mu_{1} e^{34}+\mu_{4}e^{36}+\mu_{5} e^{46}+\mu_{1}
   e^{56},$$
where $\mu_{i}$ are real numbers satisfying   $\mu_{1} \neq 0$.
The  almost complex structures $J$ such that $g(\cdot , \cdot )=F(\cdot ,J \cdot)$ are given, with respect to the basis $\{e_1, \ldots, e_6\}$,  by
$$
J e_1=\pm e_2, \quad Je_3=\pm e_4, \quad  J e_5=\pm e_6,
$$
with $e_i$ the dual of $e^i$ via the inner product, and they are integrable. Therefore,
 $(J,g, F)$ are K\"ahler-Einsten structures on $\frak k_4$.

 In order to determine all  the $6$-dimensional  rank-two   Einstein solvable Lie algebras, we need first to find the  rank-one  Einstein solvable  extensions  $\mathfrak n_4 \oplus \R\langle e_5\rangle$ of the $4$-dimensional nilpotent Lie algebras ${\mathfrak n}_4$.

 Then we consider the  standard solvable Lie algebra $\mathfrak s_6 = {\mathfrak n}_4 \oplus  \mathfrak a $, with $\mathfrak a = \R\langle e_5, e_6\rangle $ abelian and such that the basis $\{e_1, \ldots, e_6\}$ is orthonormal.

If $\mathfrak{n}=[\mathfrak{s},\mathfrak{s}]$ is abelian and dimension of $\mathfrak{a}=2$, we have to consider the structure equations

 $$
\left \{ \begin{array}{l} d e^1 = a e^{15} + b_1 e^{16},\\[2 pt]
d e^2 = a e^{25} +  b_2 e^{26},\\[2 pt]
d e^3 = a e^{35} +  b_3 e^{36},\\[2 pt]
d e^4 = a e^{45}  + b_4 e^{46},\\[2 pt]
d e^5 = d e^6  =0
\end{array} \right .
$$
and then to impose that the  inner product  for which $\{e_1, \ldots, e_6\}$ is orthonormal  has to be Einstein and $d^2 e^j =0$, $j = 1, \ldots, 6$. Solving  these  conditions we find that the structure equations are:
$$
\left \{ \begin{array}{l}
d e^1 =ae^{15}+b_1 e^{16},\\[2 pt]
d e^2 =ae^{25}+\left(-b_1-b_3-b_4\right) e^{26},\\[2 pt]
d e^3 = ae^{35}+b_3 e^{36} ,\\[2 pt]
d e^4 = ae^{45}+b_4 e^{46},\\[2 pt]
d e^5 = d e^6 =0.
\end{array} \right.
$$
where   $a=\frac{\sqrt{2(b_1^2+b_3^2+b_4^2+b_1b_3+b_1b_4+b_3b_4)}}{2}$. This Lie algebra does not admit any symplectic form.

If $\mathfrak{n}$ is nilpotent (non-abelian) and $ \dim \mathfrak{a}=2$, two cases should be  considered  for ${\mathfrak n}$ : $(0,0,e^{12},0)$
and $(0,0, e^{12},e^{13})$.
We find that they have the following rank-one Einstein solvable extensions
$$
 \begin{array}{l}
\quad (\frac 12 a e^{15}, \frac 12 a e^{25}, \frac 14 \sqrt{22} a e^{12} + a e^{35}, \frac 34 a e^{45}, 0),\\
\end{array}
$$
if $\mathfrak n=(0,0,e^{12},0)$; and
$$
 \begin{array}{l}
\quad (\frac 14 a e^{15}, \frac 12 a e^{25}, \frac 12 \sqrt{5} a e^{12} + \frac 34 a e^{35}, \frac 12 \sqrt{5} a e^{13} + a e^{45},0),\\[3 pt]
\end{array}
$$
 if $\mathfrak n=(0,0, e^{12},e^{13})$.
Now, to compute the rank-two Einstein extension of $\mathfrak{n}=(0,0,e^{12},0)$ we should consider the Lie algebra

\begin{equation*}
 \left \{
\begin{array}{l}
d e^1 =  \frac 12 ae^{15}+b_1 e^{16}+b_2 e^{26}+b_3 e^{36}+ b_4 e^{46},\\[2 pt]
d e^2 =  \frac 12 ae^{25}+b_5 e^{16}+b_6 e^{26}+b_7 e^{36}+ b_8 e^{46},\\[2 pt]
d e^3 =  \frac 14 \sqrt{22}a e^{12}+  ae^{35}+b_9 e^{16}+b_{10} e^{26}+b_{11} e^{36}+ b_{12} e^{46},\\[2 pt]
d e^4 = \frac 34 ae^{45}+b_{13} e^{16}+b_{14} e^{26}+b_{15} e^{36}+ b_{16} e^{46},\\[2 pt]
d e^5 = d e^6 =0.
\end{array}
\right.
\end{equation*}
 Then we have to  impose  the Jacobi identity  and   that  the inner product,  such that  the  basis $\{ e_1, \ldots, e_6 \}$  is orthonormal,   has to be Einstein. We  obtain  the Einstein extension:

\begin{equation}  \label{rank2KE}
\left \{ \begin{array}{l}
d e^1 =  \frac 12 ae^{15}+b_1 e^{16}+b_2 e^{26},\\[2 pt]
d e^2 =  \frac 12 ae^{25}+b_2 e^{16}+b_{10} e^{26},\\[2 pt]
d e^3 =  \frac 14 \sqrt{22}a e^{12}+  ae^{35}+(b_1+b_{10}) e^{36},\\[2 pt]
d e^4 = \frac 34 ae^{45}-2(b_{1}+ b_{10}) e^{46},\\[2 pt]
d e^5 = d e^6 =0,
\end{array}
\right.
\end{equation}
where $a = \frac{4\sqrt{66}}{33}\sqrt{3 b_1^2+5 b_{10} b_1+b_2^2+3 b_{10}^2} $, which admits the K\"ahler-Einstein structures  given, in terms of the orthonormal basis $\{e_1, \ldots, e_6\}$, by

$$
\begin{array}{l}
F =\mu_1\big(ae^{12}+2\sqrt{\frac{2}{11}}ae^{35}+2\sqrt{\frac{2}{11}}(b_1+b_{10})e^{36}\big)+\mu_2(ae^{15}+2b_1e^{16}+2b_2e^{26})\\
\hskip 11pt + \mu_3(2b_2e^{16}+ae^{25}+2b_{10}e^{26})+\mu_4\big(3ae^{45}-8(b_1+b_{10}e^{46})\big)+\mu_5e^{56},\\[8pt]
  J e_1=e_2, \quad Je_2=- e_1, \quad Je_3=2 \sqrt{\frac{2}{11}}e_5+ \sqrt{\frac{3}{11}}e_6, \quad Je_4= \sqrt{\frac{3}{11}}e_5-2 \sqrt{\frac{2}{11}}e_6,\\[8pt]
  Je_5=-2 \sqrt{\frac{2}{11}}e_3- \sqrt{\frac{3}{11}}e_4,  \qquad Je_6=-\sqrt{\frac{3}{11}}e_3+ 2\sqrt{\frac{2}{11}}e_4,
  \end{array}
$$

where $\mu_i$ are real parameters satisfying $(b_1+b_{10})\mu_1^2\mu_4\neq 0$. The almost complex structure $J$  is indeed complex i.e., the Nijenhuis tensor  of $J$ vanishes.

 From the rank-one Einstein solvable extension of $\mathfrak n=(0,0, e^{12},e^{13})$
we  get  the  $6$-dimensional  Einstein solvable Lie algebra of rank two:
\begin{equation}\label{AKEinstein}
\left \{ \begin{array}{l}
d e^1 = \frac{a}{4} e^{15}+\frac{3}{4}ae^{16},\\[2 pt]
d e^2 = \frac{a}{2} e^{25}-a e^{26},\\[2 pt]
d e^3 = \frac{1}{2} \sqrt{5} a e^{12}+\frac{3}{4} a e^{35}-\frac{a}{4} e^{36} ,\\[2 pt]
d e^4 = \frac{1}{2} \sqrt{5} a e^{13}+a e^{45}+\frac{a}{2} e^{46},\\[2 pt]
d e^5 = d e^6 =0,
\end{array}
\right .
\end{equation}
which admit the  Einstein (non-K\" ahler) almost-K\" ahler structure given by

$$
\begin{array}{l}
F=\mu_1(-2\sqrt{5}e^{12}-3e^{35}+e^{36})+\mu_2(\sqrt{5}e^{13}+2e^{45}+e^{46})+\mu_3(e^{15}+3e^{16})\\
\hskip 11pt + \mu_4(-e^{25}+2e^{26})+\mu_5e^{56},\\[8pt]
   Je_1=e_3, \quad Je_3=- e_1, \quad Je_2=-\frac{1}{\sqrt{5}}e_5 +\frac{2}{\sqrt{5}}e_6, \quad Je_4= \frac{2}{\sqrt{5}}e_5+ \frac{1}{\sqrt{5}}e_6,\\[8 pt]
    Je_5=\frac{1}{\sqrt{5}}e_2- \frac{2}{\sqrt{5}}e_4, \qquad Je_6=-\frac{1}{\sqrt{5}}e_2- \frac{2}{\sqrt{5}}e_4,
   \end{array}
   $$

   where $\mu_2(4\mu_1^2+\mu_2\mu_4)  \neq 0$. The almost-K\"ahler structure  is not integrable since
   $$N_J(e_1,e_2)=-\sqrt{5}ae_3, \hskip 3pt N_J(e_1,e_5)=a e_1, \hskip 3pt N_J(e_1,e_6)=-2a e_1.$$

Now for the rank-three extensions we proceed as  for  the previous ones.

If dim $\mathfrak{a}=3$ and $\mathfrak{n}$ is abelian, we have the Einstein solvable Lie algebra

\begin{equation}\label{KEabelianrank3}
\left \{ \begin{array}{l}
d e^1 =a e^{14}-\frac{\sqrt{6}}{2} a e^{15}+\frac{\sqrt{2}}{2} e^{16},\\[2 pt]
d e^2 = a e^{24}+\frac{\sqrt{6}}{2}a e^{25}+\frac{\sqrt{2}}{2}ae^{26},\\[2 pt]
d e^3 = a e^{34}-\sqrt{2}a e^{36} ,\\[2 pt]
d e^4 = d e^5 = d e^6 =0,
\end{array} \right.
\end{equation}
which admits the almost-K\"ahler structure given by

$$
\begin{array}{l}
F=\mu_1(\sqrt{2}e^{14}-\sqrt{3}e^{15}+e^{16})+\mu_2(\sqrt{2}e^{24}+\sqrt{3}e^{25}+e^{26})+\mu_3(-e^{34}+\sqrt{2}e^{36})\\
\hskip 11pt + \mu_4e^{45}+\mu_5e^{46}+\mu_6e^{56},\\[8 pt]
   Je_1=\frac{1}{\sqrt{3}}e_4 -\frac{1}{\sqrt{2}}e_5+\frac{1}{\sqrt{6}}e_6, \quad Je_2=\frac{1}{\sqrt{3}} e_4 +\frac{1}{\sqrt{2}}e_5 + \frac{1}{\sqrt{6}}e_6, \quad
    Je_3=-\frac{1}{\sqrt{3}} e_4+  \sqrt{\frac{2}{3}}e_6,\\[8 pt]
     Je_4=-\frac{1}{\sqrt{3}} e_1 -\frac{1}{\sqrt{3}} e_2 +\frac{1}{\sqrt{3}}e_3, \quad Je_5= \frac{1}{\sqrt{2}}e_1  -\frac{1}{\sqrt{2}}e_2 , \quad Je_6= -\frac{1}{\sqrt{6}} e_1 -\frac{1}{\sqrt{6}} e_2 -\sqrt{\frac{2}{3}}e_3,
    \end{array}
   $$

where $\mu_1\mu_2\mu_3\neq0$ and actually the almost complex structure is complex i.e., $N_J=0$.

If dim $\mathfrak{a}=3$ and $\mathfrak{n}$ is nilpotent (non-abelian) $\mathfrak{n}$ is exactly $\mathfrak{h}_3$ (the 3-dimensional Heisenberg Lie algebra), having structure equations:

$$
 \begin{array}{l}
 \quad (0,0,e^{12}).\\[3 pt]
\end{array}
$$
We find the following rank-one Einstein solvable  extension
$$
 \begin{array}{l}
 \quad (\frac{a}{2} e^{14}, \frac{a}{2} e^{24}, a e^{12} + a e^{34}, 0),\\
\end{array}
$$

proceeding in the same way as in the previous examples we find that $\mathfrak{h}_3$ does not admit a rank-three Einstein solvable extension unless it is flat.
\end{proof}

  \centerline{
  \begin{tabular}{|c|c|}\hline
$\frak s_6$ &  $6$-dimensional Einstein solvable Lie algebras of  rank one  \\\hline
    $\frak k_1$ &{ \tiny{$ (\frac{2}{13} a e^{16}, \frac{2}{13} a e^{26}, \frac{10}{13} \sqrt{3} a e^{12} + \frac{11}{13} a e^{36}, \frac{20}{13} a e^{13} + a e^{46},  \frac{10}{13} \sqrt{3} a e^{14} + \frac{15}{13} a e^{56},0)$}}\\\hline
  $\frak k_2$ &{ \tiny{$(\frac{1}{4}a e^{16}, \frac{1}{4} a e^{26}, \frac 14 \sqrt{30} a e^{12} + \frac 34 a e^{36},  \frac 14 \sqrt{30} a e^{13} + a e^{46}, \frac 12 \sqrt{5} a e^{14}+  \frac 12 \sqrt{5} a e^{23} + \frac 54 a e^{56},0)$}}\\\hline
$\frak k_3$ &{ \tiny{$ (\frac{3}{10} a e^{16}, \frac 25 a e^{26}, \frac 35 a e^{36}, \frac 15 \sqrt{30} a e^{12} + \frac{7}{10} a e^{46}, \frac{1}{5} \sqrt{30} a e^{23}+ \frac{1}{5} \sqrt{15} a e^{14} + a e^{56},0)$}}\\\hline
$\frak k_4$ & {\tiny{ $(\frac 12 a e^{16},\frac 12 a e^{26},\frac 12 a e^{36}, \frac 12 a e^{46}, a e^{12}+ a e^{34} + a e^{56})$}}\\\hline
$\frak k_5$ &{ \tiny{$ (\frac 12 a e^{16}, \frac 12 a e^{26},  2 a e^{12} + a e^{36},  \sqrt{3} a e^{13} + \frac{3}{2} a e^{46},  \sqrt{3} a e^{23} + \frac{3}{2} a e^{56},0)$}}\\\hline
$\frak k_6$ & {\tiny{$ (\frac{1}{3} a e^{16},\frac{1}{2} a e^{26},\frac{1}{2} a e^{36},a_1 e^{12} + \frac 56 a e^{46}, a e^{13} + \frac 56 a e^{56},0)$}}\\\hline
$\frak k_7$ & {\tiny{$ (\frac 12 ae^{16},\frac 12 a e^{26}, \frac 12 \sqrt{7} a e^{12} + a e^{36},\frac 34 a e^{46}, \frac 34 a e^{56},0)$}}\\\hline
$\frak k_8$ &{ \tiny{$(\frac 14 a e^{16},\frac 12 a e^{26}, \frac 14 \sqrt{26} a e^{12} + \frac 34 a e^{36}, \frac 14 \sqrt{26} a  e^{13} + a e^{46}, \frac 34 a e^{56},0)$}}\\\hline
   \end{tabular}
  }

  \medskip

  \medskip

\centerline{{\bf Table  1.}  Rank-one Einstein $6$-dimensional  solvable Lie algebras}


 \section{Calibrated $G_2$-structures}\label{sectioncalibratedstruct}

  In this section we study the existence of calibrated $G_2$-structures $\varphi$  on $7$-dimensional  solvable Lie algebras  whose underlying Riemannian metric $g_{\varphi}$ is Einstein.   We will use the classification of the $7$-dimensional Einstein solvable Lie algebras and  the following  obstructions.

 \begin{lemma} \cite{ContiF} \label{firstobstruction} If there is a non zero vector $X$ in a  $7$-dimensional Lie algebra $\mathfrak{g}$ such that $(i_{X}\varphi)^3=0$ for all
closed $3$-form $\varphi\in Z^3(\mathfrak{g}^*)$, then $\mathfrak{g}$ does not admit any  calibrated $G_2$-structure.
  \end{lemma}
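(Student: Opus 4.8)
The plan is to argue by contraposition: I will show that if $\mathfrak g$ admits a calibrated $G_2$-structure $\varphi$, then $(i_X\varphi)^3\neq 0$ for every $X\in\mathfrak g$ with $X\neq 0$, so the vector postulated in the statement cannot exist. Since by definition a calibrated $G_2$-structure on $\mathfrak g$ is a \emph{positive} $3$-form with $d\varphi=0$, such a $\varphi$ lies in $Z^3(\mathfrak g^*)$; it is the only closed $3$-form I will feed into the hypothesis, so everything reduces to a pointwise statement in linear algebra about positive (i.e.\ $G_2$) $3$-forms on a $7$-dimensional vector space.

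The linear-algebra fact I need is: if $\varphi$ is a positive $3$-form on a $7$-dimensional real vector space $V$ and $0\neq X\in V$, then $i_X\varphi\in\Lambda^2 V^*$ has non-vanishing cube (equivalently, it has rank $6$ and restricts to a symplectic form on a hyperplane complementary to $X$). I would prove this first for the standard $G_2$-form $\varphi_0$ and the model vector $X=e_1$, where in the usual orthonormal coframe $i_{e_1}\varphi_0=e^{23}+e^{45}+e^{67}$, so $(i_{e_1}\varphi_0)^3=6\,e^{234567}\neq 0$. Because $G_2\subset \SO(7)$ acts transitively on the unit sphere of $(V,g_{\varphi_0})$ and fixes $\varphi_0$, for an arbitrary unit vector $X=g_0(e_1)$ one has $i_X\varphi_0=g_0\cdot(i_{e_1}\varphi_0)$, whence $(i_X\varphi_0)^3=g_0\cdot(6\,e^{234567})\neq 0$; rescaling covers every $X\neq 0$. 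Finally, any positive $3$-form on $V$ is $\varphi=B\cdot\varphi_0$ for some $B\in \GL(7,\R)$, and since $i_X(B\cdot\varphi_0)=B\cdot(i_{B^{-1}X}\varphi_0)$ one gets $(i_X\varphi)^3=\det(B)^{-1}\,B\cdot\big((i_{B^{-1}X}\varphi_0)^3\big)$, which is non-zero because $\det B\neq 0$ and $B^{-1}X\neq 0$. This establishes the claim for all positive $\varphi$.

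Given this, the lemma is immediate: if $\mathfrak g$ admitted a calibrated $G_2$-structure $\varphi$, then $\varphi\in Z^3(\mathfrak g^*)$ and, being positive, would satisfy $(i_X\varphi)^3\neq 0$ for every nonzero $X$, contradicting the assumed existence of a vector $X$ for which $(i_X\psi)^3=0$ for all closed $\psi\in Z^3(\mathfrak g^*)$. Hence no calibrated $G_2$-structure exists on $\mathfrak g$. The only delicate point is the equivariance bookkeeping in the middle paragraph: one must keep track of the fact that the property ``$(i_X\varphi)^3\neq 0$ for all $X\neq 0$'' is invariant under the \emph{simultaneous} action of $\GL(7,\R)$ on $\varphi$ and on $X$ (not merely under $\SO(7)$), so that it can be verified on the single model $\varphi_0$; alternatively, one can replace the orbit argument by the $G_2$-decomposition $\Lambda^2 V^*=\Lambda^2_7\oplus\Lambda^2_{14}$ and a direct computation of the rank of $i_X\varphi$, but the orbit route is shorter.
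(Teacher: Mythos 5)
Your argument is correct: instantiating the hypothesis at the calibrated form $\varphi$ itself and invoking the pointwise fact that $i_X\varphi$ has rank $6$ (hence non-vanishing cube) for every positive $3$-form $\varphi$ and every $X\neq 0$ is exactly the content behind this lemma, which the paper does not prove but simply cites from Conti--Fern\'andez. The only blemish is cosmetic: in the last step $(B\cdot\alpha)^3=B\cdot(\alpha^3)$ holds with no $\det(B)^{-1}$ factor (pullback commutes with wedge), but since the factor is nonzero this does not affect the conclusion.
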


 \begin{lemma} \label{secondobstruction}

  Let $\frak g$ be a  $7$-dimensional Lie algebra and $\varphi$ a $G_2$-structure on $\frak g$. Then the bilinear form $g_{\varphi}: \frak g \times \frak g \rightarrow \R$ defined by
 $$g_{\varphi}(X,Y)vol=  \frac{1}{6}(i_X{\varphi}\wedge i_Y{\varphi}\wedge \varphi)$$  has to be a Remannian metric.

  \end{lemma}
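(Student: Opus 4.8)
The plan is to recall the standard algebraic facts about $G_2$-structures and deduce the statement essentially by definition-unwinding together with the known fact that $G_2 \subset SO(7)$. First I would recall that a $3$-form $\varphi$ on a $7$-dimensional vector space $V$ is a $G_2$-form precisely when it lies in the open $GL(7,\R)$-orbit of the standard $3$-form
$$
\varphi_0 = e^{123} + e^{145} + e^{167} + e^{246} - e^{257} - e^{347} - e^{356},
$$
whose stabilizer in $GL(7,\R)$ is the compact group $G_2$. For such a $\varphi$ the associated bilinear form $g_\varphi$, together with a volume form, is recovered pointwise by the algebraic formula
$$
g_\varphi(X,Y)\,\mathrm{vol} = \tfrac16\,(i_X\varphi)\wedge(i_Y\varphi)\wedge\varphi ,
$$
as stated; this is a $GL(7,\R)$-covariant construction in $\varphi$ (it manifestly commutes with the $GL$-action once one keeps track of how $\mathrm{vol}$ transforms).

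Next I would carry out the computation for the model $\varphi_0$: a direct substitution of $X,Y$ running over the basis $e_1,\dots,e_7$ shows that $(i_{e_i}\varphi_0)\wedge(i_{e_j}\varphi_0)\wedge\varphi_0 = 6\,\delta_{ij}\,e^{1234567}$, so that with the choice $\mathrm{vol}=e^{1234567}$ the bilinear form $g_{\varphi_0}$ is the standard positive-definite inner product $\sum (e^i)^2$. In particular $g_{\varphi_0}$ is a Riemannian metric. By the $GL(7,\R)$-covariance of the construction, for any $\varphi = A^*\varphi_0$ with $A\in GL(7,\R)$ one gets $g_\varphi = A^* g_{\varphi_0}$ (up to the positive scalar $|\det A|$ coming from the transformation of $\mathrm{vol}$), which is again positive definite. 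Since every $G_2$-form arises this way, $g_\varphi$ is always a Riemannian metric; for a left-invariant $G_2$-structure on a Lie algebra $\frak g$ this gives a left-invariant Riemannian metric.

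The only genuine content here is the orbit description of $G_2$-forms and the identification of the stabilizer with a subgroup of $SO(7)$ — i.e. the fact from \cite{FG} quoted in the Introduction that a $G_2$-structure determines a Riemannian metric and an orientation. The main (and essentially only) obstacle is thus bookkeeping: verifying the model computation $g_{\varphi_0}=\mathrm{Id}$ and checking that the scalar factor relating $\mathrm{vol}$ and the induced volume is positive, so that definiteness is preserved under $GL(7,\R)$. Everything else is formal. I would therefore present the proof as: (i) reduce to the model $\varphi_0$ by the open-orbit property; (ii) check $g_{\varphi_0}$ is the Euclidean inner product by direct expansion; (iii) transport along $GL(7,\R)$, noting the positivity of the Jacobian factor, to conclude $g_\varphi>0$ for all $G_2$-forms $\varphi$.
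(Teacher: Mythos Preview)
Your argument is correct and follows essentially the same approach as the paper: both rely on the characterization (due to Bryant and Hitchin) of $G_2$-structures as precisely those $3$-forms for which the $7$-form-valued bilinear form $B_\varphi(X,Y)=i_X\varphi\wedge i_Y\varphi\wedge\varphi$ is positive definite. The paper simply cites this fact, whereas you unpack it via the model computation for $\varphi_0$ and $GL(7,\R)$-covariance; the only minor imprecision is in your handling of the sign of $\det A$ versus $|\det A|$, which is harmless once one remembers that a $G_2$-form determines the orientation as well as the metric.
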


  \begin{proof} It follows by the fact that  in general  there is a $1-1$  correspondence between $G_2$-structures on a $7$-manifold
and  3-forms  $\varphi$  for which the  7-form-valued bilinear form $B_{\varphi}$   defined by
  $$
 B_{\varphi} (X, Y) = (i_X{\varphi}\wedge i_Y{\varphi}\wedge \varphi)
  $$
   is positive
definite  (see \cite{Bryant87}, \cite{Hitchin}).
    \end{proof}

  \begin{lemma} \label{thirdpobstruction}  Let  $(\frak s, g)$ be a $7$-dimensional Enstein solvable  Lie algebra endowed with  a    $G_2$-structure $\varphi$, then,    for any $A \in \mathfrak a = [\frak s, \frak s]^{\perp}$ such that $g_{\varphi} (A, A) = 1$, the forms
$$\alpha=i_{A}\varphi, \quad \beta=\varphi-\alpha\wedge A^*,$$
define an $SU(3)$-structure on $ (\R\langle A\rangle)^{\perp}$, where by $A^* \in \frak s^*$ we denote the dual of $A$. So in particular $\alpha \wedge \beta =0$ and $\alpha^3 \neq 0$.
 \end{lemma}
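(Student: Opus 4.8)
The plan is to recall the standard linear-algebra decomposition of a $G_2$-structure relative to a unit vector and then observe that the Einstein-solvmanifold hypothesis enters only through the identification $\mathfrak{a}\subset\mathfrak{s}$ with an honest orthogonal complement of a hyperplane, so that the claim is in fact a purely pointwise statement about $G_2$-forms on a $7$-dimensional inner product space. First I would fix the metric $g_{\varphi}$ produced by Lemma~\ref{secondobstruction} and pick $A\in\mathfrak{a}$ with $g_{\varphi}(A,A)=1$; since $(\mathfrak{s},g)$ is standard Einstein we have the orthogonal splitting $\mathfrak{s}=\mathfrak{n}\oplus\mathfrak{a}$ with $\mathfrak{n}=[\mathfrak{s},\mathfrak{s}]$, hence $V:=(\R\langle A\rangle)^{\perp}$ is a $6$-dimensional subspace and $\mathfrak{s}^*=V^*\oplus\R A^*$ with $A^*$ the metric dual of $A$. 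Writing any $3$-form as its part with no $A^*$-factor plus $A^*$ wedged with a $2$-form, one gets $\varphi=\beta+\alpha\wedge A^*$ with $\beta\in\Lambda^3 V^*$, $\alpha\in\Lambda^2 V^*$, and by construction $\alpha=i_A\varphi$ and $\beta=\varphi-\alpha\wedge A^*$; this is just the tautological decomposition, no computation required.

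The substance is then: $(\alpha,\beta)$ is an $SU(3)$-structure on $(V,g_{\varphi}|_V)$, i.e. $\alpha$ is a non-degenerate (positive) $2$-form compatible with an almost complex structure $J$ on $V$, $\beta$ is the real part of a $(3,0)$-form of the appropriate normalization, and the compatibility relations $\alpha\wedge\beta=0$ and $\tfrac{3}{2}\,\alpha^3 = $ (const)$\cdot\beta\wedge J\beta$ hold. Here I would invoke the classical fact that $G_2$ acts transitively on unit vectors in $\R^7$ with stabilizer $SU(3)$, so it suffices to verify the statement for the flat model: take the standard $\varphi_0$ on $\R^7$ in an adapted orthonormal coframe, say
$$
\varphi_0 = e^{127}+e^{347}+e^{567}+e^{135}-e^{146}-e^{236}-e^{245},
$$
with $A$ dual to $e^7$. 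Then $i_A\varphi_0 = e^{12}+e^{34}+e^{56}=:\alpha_0$ is the standard Kähler form on $\R^6$ and $\varphi_0-\alpha_0\wedge e^7 = e^{135}-e^{146}-e^{236}-e^{245} = \Real(\,(e^1+ie^2)\wedge(e^3+ie^4)\wedge(e^5+ie^6)\,)=:\beta_0$ is the real part of the standard holomorphic volume form; these $(\alpha_0,\beta_0)$ are exactly an $SU(3)$-structure, and $\alpha_0\wedge\beta_0=0$, $\alpha_0^3=6\,e^{123456}\neq 0$ hold visibly. Because $g_{\varphi}$ is a genuine inner product (Lemma~\ref{secondobstruction}) and $A$ is a unit vector for it, we may choose an orthonormal coframe in which $\varphi$ equals $\varphi_0$ and $A$ is dual to $e^7$; transporting the flat computation back gives the conclusion for $\varphi$. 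In particular $\alpha\wedge\beta=0$ and $\alpha^3\neq 0$, which are the two consequences used later.

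I do not expect a genuine obstacle here — the lemma is essentially a coordinate-free repackaging of the $SU(3)\subset G_2$ inclusion — but the one point deserving care is making the reduction to the flat model rigorous: one must note that "$\varphi$ is a $G_2$-form" means precisely that there is a $g_{\varphi}$-orthonormal coframe putting $\varphi$ in the model shape, and that the freedom to rotate within the stabilizer of $A$ is exactly $SU(3)$, so the pair $(\alpha,\beta)$ is well-defined up to $SU(3)$ and hence is an $SU(3)$-structure rather than merely a pair of forms. I would also spell out which normalization of $\beta$ I am using (so that the volume-matching identity has the right constant) to keep the statement self-contained, and remark that nothing beyond linear algebra and the standard-decomposition $\mathfrak{s}=\mathfrak{n}\oplus\mathfrak{a}$ is used, so the same argument applies verbatim with $\mathfrak{a}$ replaced by any line spanned by a $g_{\varphi}$-unit vector; the Einstein hypothesis is only invoked to know that $\mathfrak{a}=[\mathfrak{s},\mathfrak{s}]^{\perp}$ is the relevant complement in the applications that follow.
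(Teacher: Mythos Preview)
Your argument is correct and in fact supplies what the paper does not: the paper's own proof is a single line citing Proposition~4.5 of Schulte-Hengesbach's thesis, whereas you give a self-contained verification via the transitivity of $G_2$ on the unit sphere with stabilizer $SU(3)$ and an explicit check in the flat model. The underlying mathematics is the same standard $SU(3)\subset G_2$ reduction, so there is no genuine divergence in approach; your version simply unpacks the cited result, and your observation that the Einstein-solvable hypothesis plays no role in the lemma itself (only in identifying $\mathfrak a$ for later use) is accurate and worth keeping.
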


\begin{proof} It follows by Proposition 4.5 in \cite{Schulte}.
\end{proof}
In contrast with the almost-K\"ahler case,  we  can prove the following theorem

\begin{theorem} A $7$-dimensional  solvmanifold  cannot admit any left-invariant calibrated  $G_2$-structure $\varphi$ such that $g_{\varphi}$ is Einstein, unless $g_{\varphi}$ is flat.

  In particular,  if   the  $7$-dimensional   Einstein (non-flat)   solvmanifold   $(S,g)$ has  rank one, then $(S, g)$ hasa  calibrated $G_2$-structure if and only if  the Lie algebra  $\frak s$  of $S$ is isometric  to  the  Einstein solvable  Lie algebras $\frak g_1$, $\frak g_4$,  $\frak g_9$,
$\frak g_{18}$, $\frak g_{28}$ in Table 2.

\end{theorem}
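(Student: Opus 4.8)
The plan is to combine the structural classification of $7$-dimensional Einstein solvmanifolds (recalled in Section~\ref{sectioncalibratedstruct}) with the three obstructions of Lemmas~\ref{firstobstruction}, \ref{secondobstruction} and \ref{thirdpobstruction}, and to treat the abelian-nilradical case separately from the non-abelian one. First I would reduce to the rank-one case: since every higher-rank Einstein solvmanifold is built from a rank-one one and a certain abelian algebra of symmetric derivations of $\mathfrak{n}$, and since any left-invariant $G_2$-structure restricts to data on the nilradical plus the action of $\mathfrak{a}$, it suffices (after dealing with the extra obstruction that $\dim\mathfrak{a}$ imposes on the eigenvalue type, namely $(1;k)$ with $k=\dim[\mathfrak{s},\mathfrak{s}]\geq m$ when $\mathfrak{n}$ is abelian) to rule out or produce calibrated structures at rank one. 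For the non-flat conclusion, recall that the only flat Einstein solvmanifold is the abelian-nilradical one with eigenvalue type $(1;7)$, i.e. $\mathfrak{s}$ with structure equations $(ae^{17},\dots,ae^{67},0)$, which does carry a calibrated (indeed parallel, flat) $G_2$-structure; everything else must be excluded.

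Next I would handle the abelian-nilradical Einstein solvable Lie algebras of dimension $7$ that are not the flat one. By the cited results their eigenvalue type is $(1;k)$ with $k\geq m=\dim\mathfrak{a}$, so $\mathfrak{s}=\mathfrak{n}\oplus\mathfrak{a}$ with $\mathfrak{n}$ abelian of dimension $k\in\{4,5,6\}$ and $\mathfrak{a}$ abelian of dimension $7-k$ acting by a diagonal family of symmetric derivations with positive eigenvalues. Here the natural tool is Lemma~\ref{firstobstruction}: I would look for a nonzero $X\in\mathfrak{a}$ (or more precisely a vector on which all derivations act trivially, or a suitably chosen vector of $\mathfrak{n}$) such that every closed $3$-form $\varphi$ satisfies $(i_X\varphi)^3=0$. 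Since $d$ on such a solvable algebra is explicit, $Z^3(\mathfrak{s}^*)$ can be computed, and contracting with $X$ lands in $\Lambda^2$ of a small-dimensional space, forcing the cube to vanish by degree reasons whenever that space has dimension $\le 4$; this should kill all abelian-nilradical cases except the flat $(1;7)$ one. I expect the bookkeeping of $Z^3$ for each $(k;\mathfrak{a})$ to be routine but somewhat lengthy.

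Then comes the main case: $\mathfrak{n}=[\mathfrak{s},\mathfrak{s}]$ non-abelian. By Will's classification every $6$-dimensional nilpotent Lie algebra has a (rank-one) Einstein solvable extension, giving the list $\mathfrak{g}_1,\dots$ in Table~2, and the higher-rank ones are extensions of these. For each such $7$-dimensional Einstein solvable Lie algebra $\mathfrak{s}$ I would run the obstructions in order: compute $Z^3(\mathfrak{s}^*)$ explicitly from the structure equations; apply Lemma~\ref{firstobstruction} to discard those admitting a vector $X$ with $(i_X\varphi)^3\equiv 0$ on closed forms; for the survivors, parametrize the remaining closed $3$-forms $\varphi$, impose via Lemma~\ref{secondobstruction} that the associated bilinear form $g_\varphi$ (given by $g_\varphi(X,Y)\,\mathrm{vol}=\tfrac16 i_X\varphi\wedge i_Y\varphi\wedge\varphi$) be positive definite — this is an open polynomial condition that often turns out to be empty — and finally invoke Lemma~\ref{thirdpobstruction} together with Heber's uniqueness (a standard Einstein metric is unique up to isometry and scaling) to check that, when a calibrated $\varphi$ does exist, its induced metric cannot coincide with the Einstein one unless the algebra is flat. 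For the five algebras $\mathfrak{g}_1,\mathfrak{g}_4,\mathfrak{g}_9,\mathfrak{g}_{18},\mathfrak{g}_{28}$ I would instead exhibit an explicit closed $3$-form $\varphi$ whose $g_\varphi$ is the given orthonormal Einstein metric, proving the "if and only if" in the rank-one statement.

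The hard part will be the last case: for the Einstein solvable Lie algebras that pass both Lemma~\ref{firstobstruction} and the positivity test of Lemma~\ref{secondobstruction}, one must show that no closed $\varphi$ with positive-definite $g_\varphi$ has $g_\varphi$ equal to the Einstein metric — i.e. one cannot merely rule out calibrated $G_2$-structures, but must rule out those \emph{inducing the Einstein metric}. This is where Lemma~\ref{thirdpobstruction} is essential: writing $\varphi=\alpha\wedge A^*+\beta$ for a unit vector $A\in\mathfrak{a}$, one gets an $SU(3)$-structure $(\alpha,\beta)$ on $A^\perp$, and the closedness of $\varphi$ forces algebraic relations ($\alpha\wedge\beta=0$, $d\alpha=0$, compatibility with $\mathrm{ad}_A$) that, combined with the Einstein/standard condition on $\mathfrak{s}$, should be shown to be contradictory unless the derivation $\mathrm{ad}_A$ vanishes on a complement, i.e. unless $\mathfrak{s}$ degenerates to the flat model. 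Carrying this contradiction uniformly, rather than algebra-by-algebra, is the delicate point; if a uniform argument is unavailable I would fall back on a finite case check over Table~2.
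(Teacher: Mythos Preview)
Your proposal contains a fundamental misreading of the statement that makes the planned argument wrong in its central step. You write that for $\mathfrak{g}_1,\mathfrak{g}_4,\mathfrak{g}_9,\mathfrak{g}_{18},\mathfrak{g}_{28}$ you would ``exhibit an explicit closed $3$-form $\varphi$ whose $g_\varphi$ is the given orthonormal Einstein metric''. That is exactly what does \emph{not} happen. The second sentence of the theorem only asserts that these five rank-one Einstein solvmanifolds admit \emph{some} left-invariant calibrated $G_2$-structure; the first sentence says that for \emph{none} of them (nor for any other non-flat case) can a calibrated $\varphi$ induce the Einstein metric. In the paper one computes, for each of these five, the generic closed $\varphi$, writes down the matrix of $g_\varphi$ in the orthonormal basis, and shows that the system $g_\varphi = k\cdot I_7$ is inconsistent (for $\mathfrak{g}_{28}$ one can solve $48$ of the $49$ equations, but the resulting $g_\varphi$ has $g_\varphi(e_7,e_7)=4\,g_\varphi(e_1,e_1)$). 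The explicit calibrated forms in Table~3 induce metrics that are \emph{not} the Einstein one. So your plan proves the wrong thing for precisely the cases that carry the content of the theorem.

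There is a second confusion in your treatment of the abelian-nilradical case. The Lie algebra with structure equations $(ae^{17},\dots,ae^{67},0)$, $a\neq 0$, is the real hyperbolic $7$-space: it is Einstein with negative scalar curvature, not flat, and the paper shows directly that it admits no calibrated $G_2$-structure at all. The flat case in the statement is simply $\mathbb{R}^7$ (or a flat solvmanifold), where of course a parallel $G_2$-form exists; it is not an ``exception'' within the non-flat classification but rather the degenerate boundary $a=0$.

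Finally, your proposed reduction of the higher-rank cases to rank one does not work as stated: the fact that a rank-$m$ Einstein solvmanifold contains a rank-one Einstein solvmanifold says nothing about whether closed $3$-forms on the larger algebra can induce the Einstein metric there. The paper does not attempt such a reduction; it explicitly constructs all rank-two Einstein extensions of the eight $5$-dimensional nilpotent Lie algebras (via the rank-one extensions $\mathfrak{k}_1,\dots,\mathfrak{k}_8$) and the rank-three extensions of the $4$-dimensional ones, and then runs the obstructions (Lemmas~\ref{firstobstruction}--\ref{thirdpobstruction} and the $g_\varphi=k\cdot I_7$ system) case by case on each resulting $7$-dimensional algebra. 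There is no uniform argument; the proof is a finite, explicit check.
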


 \begin{proof} A  $7$-dimensional  Einstein solvable Lie algebra  $(\frak s, g)$ is necessarily standard, so one has the orthogonal decomposition (with respect to $g$)
 $$
 \frak s = \frak n  \oplus \frak  a,
 $$
 with  $\frak n = [\frak s, \frak s]$ nilpotent and $\frak a$  abelian. We will consider separately the different cases according to the rank of $\frak s$, i.e., to the dimension of $\frak a$.

 If $ \dim \frak a = 1$  and  $\frak n$ is abelian, then we know by \cite[Proposition 6.12]{Heber} that $\mathfrak s$ has structure equations
 $$
(a e^{17}, a e^{27}, a e^{37}, a e^{47}, a e^{57}, a  e^{67},0),
$$
where $a$ is a non-zero real number.
Computing  the generic closed $3$-form on $\frak s$  it is easy to check that $\frak s$ cannot admit any calibrated $G_2$-structure.

 If  $ \dim \frak a = 1$ and $\frak n$ is nilpotent (non-abelian), then  $(\frak s, g)$  is isometric  to one of the solvable Lie algebras  $\frak g_i, i = 1, \ldots, 33,$ in Table 2, endowed with the  inner product  such that the basis $\{e_1, \ldots, e_7\}$ is orthonormal.
 We may apply Lemma \ref{firstobstruction} with $X = e_6$  to  all  the  Lie   algebras $\frak g_i, i = 1, \ldots, 33$,  except  to the Lie algebras $\frak g_1$, $\frak g_4$, $\frak g_9$, $\frak g_{18}$,  $\frak g_{28}$,  showing in this way  that they  do not admit any calibrated $G_2$-structure. For the  remaining Lie algebras
 $\frak g_1, \frak g_4,  \frak g_9, \frak g_{18}, \frak g_{28}$
  we   first determine the generic closed $3$-form  $\varphi$ and then, by applying Lemma \ref{thirdpobstruction}, we  impose, that  $\alpha \wedge \alpha \wedge \alpha \neq 0$ and $\alpha \wedge \beta =0$, where
\begin{equation} \label{alphabetaexpr}
 \alpha = i_{e_7} \varphi, \quad \beta = \varphi - e^7 \wedge \beta.
\end{equation}
 Moreover, we have that the  closed $3$-form $\varphi$ defines a $G_2$-structure if and only the matrix associated to the symmetric  bilinear form $g_{\varphi}$, with respect to the orthonormal basis $\{e_1, \ldots, e_7\}$,  is positive definite.
 Since the Einstein metric is unique up to scaling,  a calibrated $G_2$-structure  induces an Einstein metric if and only if the  matrix associated to  the  symmetric  bilinear form $g_{\varphi}$,  with respect to the basis $\{e_1,\dots,e_7\}$,  is a multiple of the identity matrix.
  By a direct computation we have  that  then the  Lie algebras $\frak g_1, \frak g_4,  \frak g_9, \frak g_{18}, \frak g_{28}$ admit a calibrated $G_2$-structure (see Table 3)  but they do not admit  any calibrated $G_2$-structure inducing a Einstein (non-flat)  metric.

Next, we show that result for the Lie algebra $\frak g_{28}$.
To this end, we see that any closed $3$-form $\varphi$ on $\frak g_{28}$ has the following expression:

\begin{align*}
\varphi&=\rho_{1,2,7} e^{127}-\frac{1}{2} \rho_{5,6,7} e^{136}+\rho_{2,4,7} e^{137}+\frac{1}{2} \rho_{5,6,7} e^{145}-\rho_{2,3,7} e^{147}-\rho_{2,6,7} e^{157}\\
&+\rho_{2,5,7} e^{167}+\frac{1}{2} \rho_{5,6,7}e^{235}+\rho_{2,3,7} e^{237}+\frac{1}{2} \rho_{5,6,7} e^{246}+\rho_{2,4,7} e^{247}+\rho_{2,5,7} e^{257}\\
&+\rho_{2,6,7} e^{267}+\rho_{3,4,7} e^{347}+\rho_{3,5,7} e^{357}+\rho_{3,6,7} e^{367}+\rho_{3,6,7}e^{457}-\rho_{3,5,7} e^{467}\\[2pt]
&+\rho_{5,6,7} e^{567},
\end{align*}

 where  $\rho_{i,j,k}$ are arbitrary constants denoting the coefficients of $e^{ijk}$.
\medskip

In this case, one can check that the induced metric is given by the matrix $G$ with elements
$$
\begin{array}{l}
g (e_1, e_1) =
 -\frac{1}{4} \rho_{1,2,7} \rho_{5,6,7}^2,  \quad g (e_1, e_2) = 0, \quad  g (e_1, e_3)  =  \frac{1}{4} \rho_{2,3,7} \rho_{5,6,7}^2,\\[4 pt]
  g (e_1, e_4) = \frac{1}{4} \rho_{2,4,7} \rho_{5,6,7}^2,  \quad g(e_1, e_5) = \frac{1}{4} \rho_{2,5,6} \rho_{5,6,7}^2, \quad  g(e_1, e_6) =\frac{1}{4} \rho_{2,6,7} \rho_{5,6,7}^2, \\[4 pt]
g(e_1, e_7) =0, \quad  g( e_2, e_2) = -\frac{1}{4} \rho_{1,2,7} \rho_{5,6,7}^2, \quad  g(e_2, e_3)  =  -\frac{1}{4} \rho_{2,4,7} \rho_{5,6,7}^2,\\[4 pt]
 g(e_2, e_4) = \frac{1}{4} \rho_{2,3,7} \rho_{5,6,7}^2,  \quad g(e_2, e_5) = \frac{1}{4} \rho_{2,6,7} \rho_{5,6,7}^2,  \quad g (e_2, e_6) = -\frac{1}{4} \rho_{2,5,7}
   \rho_{5,6,7}^2,
   \end{array}
   $$
   $$
   \begin{array}{l}
 g(e_2, e_7) =0, \quad g(e_3, e_3) =  -\frac{1}{4} \rho_{3,4,7} \rho_{5,6,7}^2, \quad  g(e_3, e_4) = 0,  \quad g(e_3, e_5) = \frac{1}{4} \rho_{3,6,7} \rho_{5,6,7}^2,\\[4 pt]
  g(e_3, e_6) = -\frac{1}{4} \rho_{3,5,7}
   \rho_{5,6,7}^2,  \quad g(e_3, e_7) = 0,  \quad
    g(e_4, e_4) =  -\frac{1}{4} \rho_{3,4,7} \rho_{5,6,7}^2,\\[4 pt]
    g(e_4, e_5) =  -\frac{1}{4} \rho_{3,5,7} \rho_{5,6,7}^2,  \quad g(e_4, e_6) =  -\frac{1}{4} \rho_{3,6,7}
   \rho_{5,6,7}^2,  \quad g(e_4, e_7) = 0,\\
 g(e_5, e_5) = \frac{\rho_{5,6,7}^3}{4},  \quad g(e_5, e_6) =0,  \quad g(e_5, e_7) =0, \quad g(e_6, e_6) =  \frac{\rho_{5,6,7}^3}{4},
 \quad g(e_6, e_7) = 0, \\[4 pt]
 g (e_7, e_7) =-\rho_{5,6,7} \rho_{2,3,7}^2+\rho_{1,2,7} \rho_{3,5,7}^2+\rho_{1,2,7} \rho_{3,6,7}^2+
 \rho_{2,5,7}^2 \rho_{3,4,7}+\rho_{2,6,7}^2 \rho_{3,4,7}\\[4 pt]
 \hskip 47 pt +\rho_{2,5,7} \left(2 \rho_{2,3,7} \rho_{3,6,7}-2 \rho_{2,4,7}
   \rho_{3,5,7}\right)
-2 \rho_{2,6,7} \left(\rho_{2,3,7} \rho_{3,5,7}+\rho_{2,4,7} \rho_{3,6,7}\right)\\[4 pt]
 \hskip 47 pt -\rho_{2,4,7}^2 \rho_{5,6,7}+\rho_{1,2,7} \rho_{3,4,7} \rho_{5,6,7}.
\end{array}
 $$
 \smallskip

Now we have that
the system
$G=k\cdot I_7$ does not have solution, for any real number $k$, where $I_7$
is the identity matrix.
This means that the Lie algebra $\frak g_{28}$ does not admit any calibrated $G_2$-structure
defining an Einstein metric.
However, we can solve $48$ from the $49$ equations of
the system $G=k\cdot I_7$, and we obtain the metric defined by the matrix
$$G=2\left(
\begin{array}{ccccccc}
 1 & 0 & 0 & 0 & 0 & 0 & 0 \\
 0 & 1 & 0 & 0 & 0 & 0 & 0 \\
 0 & 0 & 1 & 0 & 0 & 0 & 0 \\
 0 & 0 & 0 & 1 & 0 & 0 & 0 \\
 0 & 0 & 0 & 0 & 1 & 0 & 0 \\
 0 & 0 & 0 & 0 & 0 & 1 & 0 \\
 0 & 0 & 0 & 0 & 0 & 0 & 4
\end{array}
\right).
$$

Since this matrix is positive definite,
the Lie algebra $\frak g_{28}$ has a calibrated $G_2$ form
$$\varphi=-2 e^{127}-2 e^{347}-e^{136}+e^{145}+e^{235}+e^{246}+2 e^{567},$$
which induces the metric defined by $G$.

 In order to determine  all the $7$-dimensional  rank-two  Einstein solvable Lie algebras, we need first to find the rank-one Einstein solvable  extensions  $\frak s_6 =  \mathfrak n_5 \oplus \R\langle e_6\rangle$ of any of the eight  $5$-dimensional nilpotent Lie algebras ${\mathfrak n}_5$ (see Table 1)
 and then consider the  standard solvable Lie algebra $\mathfrak s_7 = {\mathfrak n}_5 \oplus  \mathfrak a $, with $\mathfrak a = \R\langle e_6, e_7 \rangle $ abelian and such that the basis $\{e_1, \ldots, e_7\}$ is orthonormal.
From $\frak k_1$ we get the $7$-dimensional  Einstein Lie algebra of rank two with structure equations
$$
\left \{ \begin{array}{l}
d e^1 = \frac 13 \sqrt{6}a e^{16} + 4a e^{17},\\[2 pt]
d e^2 =  \frac 32 \sqrt{6} a  e^{26} - 7 a e^{27},\\[2 pt]
d e^3 = \frac 53 \sqrt{18}  a  e^{12} + \frac{11}{6}   \sqrt{6} a e^{36} - 3 a e^{37},\\[2 pt]
d e^4 = \frac{10}{3} \sqrt{6}a  e^{13} + \frac{13}{6} \sqrt{6} a  e^{ 46} + a e^{47},\\[2 pt]
d e^5 = \frac 53 \sqrt{18} a e^{14} + \frac 52 \sqrt{6} a  e^{56} + 5 a  e^{57},\\[2 pt]
d e^6 = d e^7 =0.
\end{array} \right.
$$
By computing the generic closed $3$-form $\varphi$ and by using Lemma \ref{secondobstruction} and Lemma \ref{thirdpobstruction}, we get that the matrix associated to $g_{\varphi}$, with respect to the  basis $\{e_1,\ldots, e_7\}$,  cannot be a multiple of the identity matrix.

From $\frak k_2$ we do not  get any  $7$-dimensional  Einstein Lie algebra of rank two. From $\frak k_3$  we get the $7$-dimensional  Einstein Lie algebra of rank two  with structure equations
$$
\left \{ \begin{array}{l}
d e^1 = \frac 17 \sqrt{21} a e^{16}  - a e^{17},\\[2 pt]
d e^2 = \frac{4}{21} \sqrt{21} a e^{26} + 2a e^{27},\\[2 pt]
d e^3 = \frac 27 \sqrt{21} a e^{36}  - 2a e^{37},\\[2 pt]
d e^4 = \frac{2}{21} \sqrt{30} \sqrt{21} a e^{12} + \frac 13 \sqrt{21}a e^{46}+a e^{47},\\[2 pt]
d e^5 = \frac{2}{21} \sqrt{30} \sqrt{21} a e^{14} + \frac{2}{21} \sqrt{15} \sqrt{21}a e^{23} + \frac{10}{21} \sqrt{21} a e^{56},\\[2 pt]
d e^6 = d e^7 =0.
\end{array} \right.
$$
By computing the generic closed $3$-form $\varphi$ and by using Lemma \ref{secondobstruction} and Lemma \ref{thirdpobstruction}, we get that the matrix associated to $g_{\varphi}$, with respect to the  basis $\{e_1,\ldots, e_7\}$,  cannot be a multiple of the identity matrix.
From $\frak k_4$  we get the $7$-dimensional  Einstein Lie algebras of rank two  with structure equations
$$
\left \{ \begin{array}{l}
d e^1 =a e^{16} - b_7 e^{17} + b_2 e^{27} + b_3 e^{37} + b_4 e^{47},\\[2 pt]
d e^2 = a e^{26}  + b_2 e^{17} + b_7 e^{27} + b_4 e^{37} - b_3 e^{47} ,\\[2 pt]
d e^3 =ae^{36} + b_3 e^{17} + b_4 e^{27} - b_{19} e^{37} + b_{14} e^{47} ,\\[2 pt]
d e^4 =a e^{46} + b_4 e^{17} - b_3 e^{27}+ b_{14} e^{37} + b_{19} e^{47},\\[2 pt]
d e^5 =2a (e^{12} + e^{34} + e^{56}) ,\\[2 pt]
d e^6 = d e^7 =0
\end{array}  \right.
$$
where $a=\frac{1}{2}\sqrt{b_7^2 + b_2^2 + 2 b_3^2 + 2 b_4^2 + b_{14}^2 + b_{19}^2}$.
We may  then apply Lemma \ref{firstobstruction} with $X = e_5$.

From $\frak k_5$  we get the $7$-dimensional  Einstein Lie algebras of rank two  with structure equations
$$
\left \{ \begin{array}{l}
d e^1 = ae^{16} +b_{19} e^{17} +b_{20} e^{27},\\[2 pt]
d e^2 = a e^{26} + b_{20} e^{17}  - b_{19} e^{27},\\[2 pt]
d e^3 =4a e^{12} +2a e^{36},\\[2 pt]
d e^4 = 2\sqrt{3} a e^{13} +  3a e^{46} +b_{19} e^{47} +b_{20} e^{57},\\[2 pt]
d e^5 = 2\sqrt{3} a e^{23} +   3a e^{56} +b_{20} e^{47} - b_{19} e^{57} ,\\[2 pt]
d e^6 = d e^7 =0.
\end{array} \right.
$$

For these Lie algebras in order to study completely all possibilities we  will study separately   the two cases
$b_{20} \neq 0$ and $b_{20} =0$.
By  computing the generic closed $3$-form $\varphi$ and using  Lemma \ref{thirdpobstruction}  we have that the system   $ g_{\varphi} (e_i, e_j) - k \delta_i^j =0$ (where $k$  is a non zero positive real number) with variables the  coefficients $c_{ijk}$ of $e^{ijk}$ in $\varphi$ has no solutions.

From $\frak k_6$  we get the two families of  $7$-dimensional  Einstein Lie algebras of rank two, namely $\frak k_{6,1}$ and $\frak k_{6,2}$  with respectively  structure equations
$$
1) \left \{ \begin{array}{l}
d e^1 = 2 ae^{16} + 2(b_{19}+b_{25})e^{17},\\[2 pt]
d e^2 = 3 ae^{26}  - (b_{19} + 2 b_{25}) e^{27} + b_{12} e^{37},\\[2 pt]
d e^3 = 3 a e^{36} + b_{12} e^{27} -  (b_{19} + 2 b_{25}) e^{37},\\[2 pt]
d e^4 = 6a e^{12}+5a e^{46} + b_{19} e^{47} + b_{12} e^{57},\\[2 pt]
d e^5 =  6a  e^{13}  + 5a e^{56} + b_{12} e^{47} + b_{25} e^{57},\\[2 pt]
d e^6 = d e^7 =0
\end{array} \right.
$$
 and
 $$
2) \left \{  \begin{array}{l}
 d e^1 = \sqrt{2} b_{25} e^{16} + 4 b_{25} e^{17},\\[2pt]
 d e^2 =  	\frac{3}{2}  \sqrt{2} b_{25} e^{26} - 3 b_{25} e^{27} - b_{12} e^{37},\\[2 pt]
 d e^3 =  	\frac{3}{2}  \sqrt{2} b_{25} e^{36} + b_{12} e^{27} -3 b_{25} e^{37},\\[2 pt]
 d e^4 =  3 \sqrt{2}  b_{25} e^{12} + \frac 52 \sqrt{2} b_{25} e^{46} + b_{25} e^{47}- b_{12} e^{57},\\[2 pt]
 d e^5 =   3 \sqrt{2}  b_{25} e^{13} + \frac 52 \sqrt{2} b_{25} e^{56}  + b_{12} e^{47} + b_{25} e^{57},\\[2 pt]
 d e^6 = d e^7 =0.
\end{array} \right.
 $$
For $1)$ we  compute first  the generic closed $3$-forms $\varphi$ and then, using  Lemma \ref{thirdpobstruction} for $A = e_7$,  we impose the condition $\alpha \wedge \beta =0$. By this condition we get in particular  that
$$
\rho_{1,2,3} \rho_{1,3,4} (b_{19} + b_{25}) =0,
$$
where by $\rho_{i,j,k}$ we denote the coefficient of $e^{ijk}$ in $\varphi$.
One can immediately exclude the case  $\rho_{1,3,4} =0$, since otherwise the element of the matrix associated to the metric $g_{\varphi}$  has to be  zero. Then we  study  separately the cases  $\rho_{1,2,3} =0$ and $b_{19} =  - b_{25}$. In both cases  we  do not find any solution for the system $ g_{\varphi} (e_i, e_j) - k \delta_i^j =0$.
 For $2)$ we study separately the cases $b_{12}b_{25}\neq0$, $b_{12} =0$ and $b_{25} =0$.

In the case $b_{12}b_{25}\neq0$ we compute first the generic closed $3$-forms $\varphi$ and then, using  Lemma \ref{thirdpobstruction} for $A = e_7$,  we impose the condition $\alpha \wedge \alpha \wedge \alpha \neq 0$, getting the
condition  $\rho_{1,2,5}\neq0$. Thus, we take the system   $ S_{ij} =g_{\varphi} (e_i, e_j) - k \delta_i^j =0$ and get the values of $\rho_{2,5,6}, \rho_{3,4,6}, \rho_{3,5,6}$ and $\rho_{2,3,6}$ from $S_{5,5}, S_{3,5}, S_{4,4}$ and $S_{3,4}$. Now $S_{3,3}=-k$, and the system does not admit any solution.

In the case $b_{12}=0$ we first  compute  the generic closed $3$-forms $\varphi$ and then we use  Lemma \ref{thirdpobstruction} for $A = e_7$,
obtaining  that $\rho_{2,3,6}, \rho_{2,4,6}, \rho_{3,5,6}$ and $\rho_{4,5,6}$ are all different from zero.

In the case $b_{25}=0$  we first  compute  the generic closed $3$-forms $\varphi$ and then we  may apply Lemma \ref{firstobstruction}
with $X=e_1,\dots,e_5$.

 From $\frak k_7$  we get the four   families of    $7$-dimensional  Einstein Lie algebras of rank two, namely $\frak k_{7,1}, \frak k_{7,2}, \frak k_{7,3}$ and $\frak k_{7,4}$  with respectively  structure equations $$
i)  \left \{ \begin{array}{l}
 d e^1 = a e^{16} + (- b_7 + b_{13})  e^{17} + b_6 e^{27},\\[2 pt]
 d e^2 = a e^{26} + b_6 e^{17} + b_7 e^{27},\\[2 pt]
 d e^3 = a   (\sqrt{7} e^{12} + 2 e^{36}) + b_{13} e^{37},\\[2 pt]
 d e^4 =  \frac{3}{2} a  e^{46} - (2 b_{13} + b_{25}) e^{47} + b_{24} e^{57},\\[2 pt]
 d e^5 =  \frac{3}{2} a e^{56} + b_{24} e^{47} + b_{25} e^{57},\\[2 pt]
 d e^6 = d e^7 =0, \quad \quad
\end{array} \right.
 $$
 where $a = \frac{2}{21}\sqrt{21 b_7^2 - 21 b_7 f_{13} + 63 b_{13}^2 + 21 b_6^2 + 42 b_{25} b_{13} + 21 b_{25}^2 + 21 b_{24}^2}$,
 $$
ii)  \left \{  \begin{array}{l}
  d e^1 =a e^{16} + (- b_7 + b_{13})  e^{17} + b_6 e^{27},\\[2 pt]
  d e^2 =a e^{26} + b_6 e^{17} + b_7 e^{27},\\[2 pt]
  d e^3 =a (\sqrt{7} e^{12} + 2 e^{36}) + b_{13} e^{37},\\[2 pt]
  d e^4  =  \frac{3}{2} a e^{46}  - b_{13} e^{47} - b_{24} e^{57},\\[2pt]
  d e^5 =    \frac{3}{2} a e^{56} + b_{24} e^{47} - b_{13} e^{57},\\[2 pt]
 d e^6 = d e^7 =0.
\end{array} \right.
 $$

where $a=\frac{2}{21}  \sqrt{21 b_7^2 - 21 b_7 b_{13} + 42 b_{13}^2 + 21 b_{6}^2} $

 $$
iii) \left \{  \begin{array}{l}
d e^1 = a e^{16} +  \frac 12 b_{13}   e^{17} - b_6 e^{27},\\[2 pt]
d e^2 = a e^{26} + b_6 e^{17} + \frac 12 b_{13} e^{27},\\[2 pt]
d e^3 = a  (\sqrt{7} e^{12} + 2 e^{36}) + b_{13} e^{37},\\[2 pt]
d e^4 = \frac{3}{2} a e^{46} - (2 b_{13} + b_{25}) e^{47} + b_{24} e^{57},\\[2 pt]
d e^5 =  \frac{3}{2}a e^{56} + b_{24} e^{47} + b_{25} e^{57},\\[2 pt]
 d e^6 = d e^7 =0.
\end{array} \right.
 $$

 where $a=\frac{1}{21} \sqrt{231  b_{13}^2 + 168 b_{13} b_{25}+ 84 b_{25}^2 + 84 b_{24}^2}$

 $$
iv)  \left \{ \begin{array}{l}
 d e^1 =  \frac 13 \sqrt{3} b_{25} e^{16} - \frac 12 b_{25} e^{17} - b_6 e^{27},\\[2 pt]
 d e^2 =  \frac 13 \sqrt{3} b_{25} e^{26} + b_6 e^{17} - \frac{1}{2} b_{25} e^{27},\\[2 pt]
 d e^3 = \frac 13 \sqrt{3} b_{25} (\sqrt{7} e^{12} + 2 e^{36}) - b_{25} e^{37},\\[2 pt]
 d e^4 = \frac 12 \sqrt{3} b_{25} e^{46} + b_{25} e^{47} - b_{24} e^{57},\\[2 pt]
 d e^5 =  \frac 12 \sqrt{3} b_{25} e^{46} + b_{24} e^{47}+ b_{25} e^{57},\\[2 pt]
   d e^6 = d e^7 =0.
\end{array} \right.
 $$
For all of them after computing the generic closed $3$-forms we may apply  Lemma \ref{firstobstruction}   with $X=e_3$.

 From $\frak k_8$  we get the   $7$-dimensional  Einstein Lie algebras of rank two  with   structure equations
$$
\left \{ \begin{array}{l}
d e^1 = a e^{16}+ (- b_{13}  + b_{19}) e^{17},\\[2 pt]
d e^2 = a e^{26} + (2 b_{13} - b_{19}) e^{27} ,\\[2 pt]
d e^3 =  a (\sqrt{26} e^{12} + 3 e^{36}) + b_{13} e^{37},\\[2 pt]
d e^4 =  a (\sqrt{26} e^{13} + 4 e^{46}) + b_{19} e^{47},\\[2 pt]
d e^5 =  3a e^{56} -(2 b_{13} +b_{19}) e^{57},\\[2 pt]
d e^6 = d e^7 =0.
\end{array} \right.
 $$

where $a=  \frac{1}{39} \sqrt{390  b_{13}^2 - 78 b_{13} b_{19} + 156 b_{19}^2}$.

We study separately the cases  $b_{13}b_{19}\neq0$, $b_{13} =0$ and $b_{19}=0$.

 In the case   $b_{13}b_{19}\neq0$  using  Lemma \ref{thirdpobstruction} (i.e. $\alpha \wedge \alpha \wedge \alpha \neq 0$, with  $A = e_ 7$) we may  suppose $\rho_{1,2,4}\rho_{1,3,5}\neq0$ for the generic closed $3$-form.  Now, we consider  the  system $S_{ij} = g_{\varphi} (e_i,e_j) - k \delta_i^j =0$.   We  take  $\rho_{1,2,5}, \rho_{1,2,3}$ and $a$ from $S_{4,5} =0= S_{3,4}$ and $S_{2,2}=0$. In the new system  we can conclude that  from equations $S_{2,4}=0$ and $S_{3,5}=0$ that there is no solution. Indeed from the two equations it follows that $b_{13}=-\frac{19}{5}b_{19}$ and $b_{13}=\frac{7}{31}b_{19}$,  which  is a contradiction since $b_{13}b_{19}\neq0$.

 In the case $b_{13}=0$ using  Lemma \ref{thirdpobstruction} we may  suppose
 $\rho_{1,3,5}\rho_{3,4,7}\neq0$ for the generic closed $3$-form. Then we get $\rho_{2,5,7}, k$ and $\rho_{2,3,7}$ from $S_{5,5} =0= S_{5,3} =S_{2,3}$. The new system satisfies:
$$S_{4,4}=\frac{7 \rho_{1,3,5} \left(49 \rho_{1,2,5}^2+152 \rho_{3,4,7}^2\right)}{76 \sqrt{78}}.$$

 In the case $f_{19}=0$ using  Lemma \ref{thirdpobstruction} we may  suppose
 $c_{1,3,5}c_{3,4,7}\neq0$ for the generic closed $3$-form. Then we get $c_{1,2,3}, c_{2,5,6}, c_{2,3,7}$ and $c_{1,3,5}$ from $S_{3,4}=0= S_{2,3}= S_{2,4}$ and $S_{3,3}=0$. The new system satisfies:
$$S_{4,4}=-\frac{959322 c_{1,2,5}^2 c_{3,4,7}^4+59711 k^2}{59711 k}.$$

what implies again $S_{4,4}\neq0$.

If $[\frak s, \frak s] = \frak n$ is abelian and $\dim \mathfrak n = 5$, we have to consider the structure equations
$$
\left \{ \begin{array}{l}
d e^1 = a e^{16} + b_1 e^{17},\\[2 pt]
d e^2 = a e^{26} + b_2 e^{27},\\[2 pt]
d e^3 = a e^{36} + b_{3} e^{37},\\[2 pt]
d e^4 = a e^{46} + b_{4} e^{47}\\[2 pt]
d e^5 = a e^{56} + b_{5} e^{57},\\[2 pt]
d e^6 = d e^7 =0.
\end{array} \right.
$$
By imposing that $\frak s$ is a Einstein  Lie algebra (the inner product  is the one for which $\{e_1, \ldots, e_7\}$  is orthonormal), we get the Lie algebras with structure equations
$$
\left \{ \begin{array}{l}
d e^1 =a e^{16} + (- b_2 - b_{3} - b_{4}) e^{17},\\[2 pt]
d e^2 =  a e^{26} + b_2 e^{27},\\[2 pt]
d e^3 =  a e^{36} + b_{3} e^{37},\\[2 pt]
d e^4 =  a e^{46} + b_{4} e^{47}\\[2 pt]
d e^5 =  a e^{56},\\[2 pt]
d e^6 = d e^7 =0,
\end{array} \right.
$$
where $a = \sqrt{10 b_7^2 + 10 b_7 b_{13} + 10 b_7 b_{19} + 10 b_{13}^2 + 10 b_{13} b_{19} + 10 b_{19}^2}.$
For these Lie algebras we first  compute  the generic closed $3$-forms $\varphi$ and then we  may apply Lemma \ref{firstobstruction}
with  $X=e_1,\dots , e_5$.

 In order to determine all  the $7$-dimensional  rank-three  Einstein solvable Lie algebras, we need first to find the rank-one Einstein solvable  extensions  $\mathfrak n_4 \oplus \R\langle e_5\rangle$ of the  two $4$-dimensional nilpotent Lie algebras ${\mathfrak n}_4$
and then consider the  standard solvable Lie algebra $\mathfrak s_7 = {\mathfrak n}_4 \oplus  \mathfrak a $, with $\mathfrak a = \R\langle e_5, e_6, e_7\rangle $ abelian and such that the basis $\{e_1, \ldots, e_7\}$ is orthonormal.
For any of the nilpotent Lie algebras ${\mathfrak n}_4$ we find the following rank-one Einstein solvable  extensions
$$
 \begin{array}{l}
1)  \quad (\frac 14 a e^{15}, \frac 12 a e^{25}, \frac 12 \sqrt{5} a e^{12} + \frac 34 a e^{35}, \frac 12 \sqrt{5}a e^{13} + a e^{45},0)\\[3 pt]
2)  \quad (\frac 12 a e^{15}, \frac 12 a e^{25}, \frac 14 \sqrt{22} a e^{12} + a e^{35}, \frac 34 a e^{45}, 0)
\end{array}
$$
From $1)$ we do not get  any  $7$-dimensional  Einstein Lie algebra of rank three. From $2)$ we get the $7$-dimensional  Einstein Lie algebra of rank three with structure equations
\begin{equation} \label{rank3nonabel}
\left \{ \begin{array}{l}
d e^1 =  \frac 12 a e^{15} - (b_{10} + \frac{1}{2} b_{28}) e^{16} + b_2 e^{26} +  (- b_{14} + b_{23}) e^{17} + b_6 e^{27},\\[2 pt]
d e^2 = \frac 12 a e^{25} + b_9 e^{16} + b_{10} e^{26} + b_{13} e^{17} + b_{14} e^{27} ,\\[2 pt]
d e^3 = \frac 14 \sqrt{22} a e^{12} + a e^{35} - \frac 12 b_{28} e^{36} + b_{23} e^{37} ,\\[2 pt]
d e^4 = \frac 34 a e^{45} + b_{28} e^{46} - 2 b_{23} e^{47},\\[2 pt]
d e^5 = d e^6 = d e^7 =0.
\end{array} \right.
\end{equation}

satisfying  the conditions $d^2e^i=0, i=1, \dots, 4$, and one of  the following:
\begin{enumerate}
\item[(i)] $a = \sqrt{\frac{32 b_{14}^2 - 32 b_{14} b_{23} + 96 b_{23}^2 +32 b_{13}^2}{33}},$  $b_2 = b_9 =  \pm \sqrt {\frac {11}{4 b_{14}^2 - 4 b_{14} b_{23} + b_{23}^2 + 4 b_{13}^2
} } b_{13} b_{23}$, $b_6 = b_{13}, b_{10} = \mp \frac{1}{11}  \sqrt {\frac{11}{4 b_{14}^2 - 4 b_{14} b_{23} + b_{23}^2 + 4 b_{13}^2
}} (- 13 b_{14} b_{23} + 6 b_{23}^2 + 2 b_{14}^2 + 2 b_{13}^2)$, $b_{28} = \pm \frac{2}{11} \sqrt{\frac{11}{4 b_{14}^2 - 4 b_{14} b_{23} + b_{23}^2 + 4 b_{13}^2
}}(4 b_{14}^2 - 4 b_{14} b_{23} + b_{23}^2 + 4 b_{13}^2)$
\item[(ii)] $a =  2 \sqrt{\frac 23} b_{23}, b_2 = \pm \frac 12 \sqrt{11b_{23}^2 - 4 b_{10}^2}, b_6 = b_{13} = b_{28} = 0, b_{14} = \frac 12 b_{23}.$
\end{enumerate}
For the case $(i)$ we consider  the generic  closed $3$-form $\varphi$  and use all the time that  $a  \neq 0$ and the condition  $ \alpha \wedge \beta =0$  (with  $\alpha=i_{e_7} \varphi$). By imposing the vanishing of  $g_{\varphi} (e_3, e_4)$  and    the condition $g_{\varphi} (e_3, e_3) \neq 0$
we  have always that  either $g_{\varphi} (e_2, e_2) =0$ or  $g_{\varphi} (e_3, e_3) =0$ so we cannot have calibrated $G_2$-structures associated to the Einstein metric.

For the case $(ii)$ we start  only to impose the conditions
$$
a =  \frac{2}{3} \sqrt{6} b_{23}, \qquad b_6 = b_{13} = b_{28} = 0, \qquad b_{14} = \frac 12 b_{23},$$
one needs for the Einstein condition  still to impose that  $b_2 = \pm \frac 12 \sqrt{11b_{23}^2 - 4 b_{10}^2}$.
We  consider  the generic closed $3$-form  $\varphi$,
using  all the time that $b_{23} \neq 0$ (since $a \neq 0$) and we impose  $\alpha \wedge \beta =0$, where $\alpha = i_{e_7} \varphi$. We use that $g_{\varphi} (e_3, e_3 ) \neq 0$ and
 the equations
$$
g_{\varphi} (e_3, e_4)  = g_{\varphi} (e_2, e_3) = g_{\varphi} (e_1, e_3)=0.
$$
Studying separately the solutions of the above system we show that no calibrated $G_2$-structure can induce the Einstein metric.

If $\frak n = [\frak s, \frak s]$ is abelian and $\dim \mathfrak n = 4$, we get the Einstein solvable Lie algebras of rank three with the structure equations

\begin{equation}\label{rank3abel}
\left \{  \begin{array}{l}
d e^1 = a e^{15} + b_1 e^{16} + b_2 e^{17},\\[2 pt]
d e^2 = a e^{25} +  b_3 e^{26} + b_4 e^{27},\\[2 pt]
d e^3 = a e^{35} +  b_5 e^{36} + b_6 e^{37} ,\\[2 pt]
d e^4 = a e^{45}  + b_7 e^{46} + b_8 e^{47} ,\\[2 pt]
d e^5 = d e^6 = d e^7 =0,
\end{array} \right .
\end{equation}

satisfying  the conditions
$$ \begin{array}{l}
 b_7 = -b_1-b_3-b_5, \qquad b_8 = -b_2- b_4-b_6, \\[8pt]
 2 b_1 b_2+2 b_3 b_4 +2 b_5 b_6 + b_2 b_3 + b_2 b_5 + b_1 b_4 + b_4 b_5 + b_1 b_6 + b_3 b_6 =0,\\[8pt]
 2 b_1^2+2 b_3^2+2 b_5^2+2 b_1 b_3 +2 b_1 b_5 +2 b_3 b_5= 4 a^2.
 \end{array}
 $$
We impose for the generic $3$-form $\varphi$ the conditions  $d \varphi =0$ and  $\alpha  \wedge \beta =0$ (with $\alpha = i_{e_7} \varphi$), using all the time that $a \neq 0 $. Then  we consider  the equations
\begin{equation} \label{partialrank3ab}
g_{\varphi} (e_1, e_2) = g_{\varphi} (e_1, e_3) = g_{\varphi} (e_1, e_4)= g_{\varphi} (e_2, e_3) =0.
\end{equation}
By    the conditions $$g_{\varphi} (e_1, e_1) \neq 0,  \quad g_{\varphi} (e_2, e_2)  \neq 0, \quad  g_{\varphi} (e_3, e_3)  \neq 0,  \quad g_{\varphi} (e_4, e_4)  \neq 0$$   we get respectively   $$\rho_{1,2,5}  \rho_{1,3,5}   \rho_{1,4,5} \neq 0,  \,   \rho_{1,2,5}   \rho_{2,3,5}  \rho_{2,4,5}  \neq 0,  \,   \rho_{1,3,5}   \rho_{2,3,5}  \rho_{3,4,5} \neq 0, \,    \rho_{1,4,5}  \rho_{2,4,5}   \rho_{3,4,5} \neq 0.$$
In  all the solutions  of \eqref{partialrank3ab} we   have  always that  either $ \rho_{1,2,5} =0$  or  $ \rho_{1,3,5}  =0$ or  $ \rho_{1,4,5} =0$, which is not possible.
 \end{proof}

So, now we can conclude that there is not a counterexample for the $G_2$ analogue of the Goldberg conjecture in the class of solvmanifolds,  but  we can   show  the existence of  a calibrated $G_2$-structure   whose underlying metric    is a (non trivial)  Ricci soliton.

\begin{example}
If we take the 6-dimensional nilpotent Lie algebra $\mathfrak{n}_6=(0,0,0,0,e^{13}-e^{24},e^{14}+e^{23})$ and we compute the rank-one Ricci soliton solvable extension, we obtain the Lie algebra with structure equations:
$$\mathfrak{s}=(-\frac{1}{2}e^{17},-\frac{1}{2}e^{27}, e^{37}, e^{47}, e^{13}-e^{24}-\frac{1}{2}e^{57}, e^{14}+e^{23}-\frac{1}{2}e^{67}, 0)$$
where the Ricci soliton metric $g$ is the one making the basis  $\{e_1, \dots, e_7\}$ orthonormal.  The Lie algebra $\frak s$  admits the calibrated $G_2$ form
$$\varphi=-e^{136}+e^{145}+e^{235}+e^{246}+e^{567}-e^{127}-e^{347}$$  such that  $g_{\varphi} =g$ and   $d\ast \varphi \neq 0$. Therefore   $g_{\varphi}$  is a Ricci-soliton on $\frak s$  but  $\varphi$ is   not  parallel.
\end{example}


 \section{Cocalibrated $G_2$-structures}\label{sectioncocalibratedstruct}

  In this section we study the existence of cocalibrated $G_2$-structures $\varphi$  on $7$-dimensional Einstein solvable Lie algebras $(\frak s, g)$ whose underlying Riemannian metric $g_{\varphi} = g$.

We will use the classification of the $7$-dimensional Einstein solvable Lie algebras of the previous section, Lemma  \ref{secondobstruction}  and \ref{thirdpobstruction}, together with the following  obstructions.

 \begin{lemma} \label{obstrcocal} Let $(\frak g, g)$ be a $7$-dimensional  metric Lie algebra.
If for every   closed $4$-form $\Psi\in Z^4(\mathfrak{g}^*)$  there exists $X\in \mathfrak{g}$ such that $(i_{X}(*\Psi))^3=0$, then $\mathfrak{g}$ does not admit a cocalibrated $G_2$-structure inducing the metric $g$.
\end{lemma}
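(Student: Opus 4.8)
The plan is to argue by contradiction, reducing everything to the basic positivity property of a $G_2$-form. Suppose $\mathfrak g$ admits a cocalibrated $G_2$-structure $\varphi$ with $g_{\varphi}=g$. Since being cocalibrated means precisely that $\varphi$ is co-closed, i.e. $d(*\varphi)=0$, the $4$-form $\Psi:=*\varphi$ lies in $Z^4(\mathfrak g^*)$. Because the Hodge star of the Riemannian metric $g$ squares to the identity on $3$-forms in dimension $7$ (the sign is $(-1)^{3\cdot 4}=1$), we have $*\Psi=\varphi$. Applying the hypothesis to this particular closed $4$-form, we obtain a nonzero $X\in\mathfrak g$ with $(i_X(*\Psi))^3=(i_X\varphi)^3=0$. (As in Lemma \ref{firstobstruction}, the hypothesis is understood for nonzero $X$; for $X=0$ it is vacuous.)

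The second, and essentially the only substantive, ingredient is that $(i_X\varphi)^3\neq 0$ for every $G_2$-form $\varphi$ and every nonzero vector $X$. I would deduce this from the $SU(3)$-reduction already invoked in the paper: rescaling $X$ so that $g_{\varphi}(X,X)=1$ — which alters $(i_X\varphi)^3$ only by a nonzero factor — the forms $\alpha:=i_X\varphi$ and $\beta:=\varphi-\alpha\wedge X^*$ define an $SU(3)$-structure on the $6$-dimensional subspace $X^{\perp}$, whence in particular $\alpha^3\neq 0$. This is Lemma \ref{thirdpobstruction}, or rather its underlying linear-algebraic content, Proposition 4.5 of \cite{Schulte}, which uses nothing about the ambient Lie algebra and holds for an arbitrary unit vector in a $7$-dimensional inner-product space equipped with a $G_2$-form. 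Alternatively, since $G_2$ acts transitively on the unit sphere of $\R^7$, one reduces to the flat model $\varphi_0$ and a single unit vector, for which $i_X\varphi_0$ is the standard K\"ahler form on $X^\perp$ and its cube is a nonzero multiple of the volume form.

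Combining the two steps yields the contradiction $0=(i_X\varphi)^3\neq 0$, so $\mathfrak g$ cannot carry a cocalibrated $G_2$-structure inducing $g$. I do not anticipate any real difficulty: the statement is the exact $4$-form analogue of Lemma \ref{firstobstruction}, the dictionary ``cocalibrated $\Leftrightarrow$ $*\varphi$ closed'' is immediate, and the nondegeneracy of $i_X\varphi$ on $X^{\perp}$ is a standard feature of $G_2$-structures already exploited in the paper. The one point that deserves an explicit remark is that, although Lemma \ref{thirdpobstruction} is stated for vectors in the abelian complement $[\mathfrak s,\mathfrak s]^{\perp}$ of an Einstein solvable Lie algebra, its proof applies verbatim to any unit vector, which is what the argument above uses.
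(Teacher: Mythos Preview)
Your argument is correct. You take $\Psi=*\varphi$, use $**=\mathrm{id}$ on $3$-forms to reduce to $(i_X\varphi)^3\neq 0$ for every nonzero $X$, and then derive this from the $SU(3)$-reduction on $X^{\perp}$ (equivalently, from the transitivity of $G_2$ on $S^6$). Your remark that the linear-algebraic content of Lemma~\ref{thirdpobstruction} applies to an arbitrary unit vector, not just to $A\in[\mathfrak s,\mathfrak s]^{\perp}$, is exactly the point needed and is well observed.

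The paper's proof takes a different, purely computational route: it fixes a $G_2$-adapted orthonormal coframe $\{f^1,\dots,f^7\}$ and checks directly, for each basis vector $f_i$, that a certain $6$-form built from $\varphi$ and $*\varphi$ is nonzero, extending by linearity. In fact the quantity the paper computes is $\iota_X(*\varphi)\wedge\varphi$ rather than $(i_X\varphi)^3$; both are nonvanishing for $X\neq 0$, but the latter is what the lemma literally asks for, so your argument matches the statement more transparently. The trade-off is that the paper's approach is self-contained and elementary (just multiplying out forms in a fixed basis), while yours is more conceptual and avoids computation by recycling Lemma~\ref{thirdpobstruction}; either way the substance is the pointwise nondegeneracy of a $G_2$-form, and your version makes that dependence explicit.
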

\begin{proof} It is sufficient to prove that if a $3$-form $\varphi$ defines a $G_2$-structure  on $\frak g$
then for any $X\in \frak g$ we have
$$
\iota_X(*\varphi)\wedge \varphi\not=0,
$$
where * is the Hodge star operator with respect to the metric $g_{\varphi}$ associated to $\varphi$.
Since the $3$-form $\varphi$ defines a $G_2$-structure on $\frak g$, then
there exists  a basis of $\frak g$
$\{f_1,\cdots, f_7\}$   (which is orthonormal with respect to $g_{\varphi}$)  such that
$$
\varphi =f^{124} + f^{235} + f^{346} + f^{457}+ f^{156}+ f^{267}+ f^{137},
$$
where $\{f^1,\cdots, f^7\}$ is the basis of ${\frak g}^*$ dual to $\{f_1,\cdots, f_7\}$.
Taking the Hodge star operator with respect to $\{f_1,\cdots, f_7\}$ we have
$$
*\varphi = - f^{3567} + f^{1467} - f^{1257} + f^{1236}+ f^{2347} - f^{1345}- f^{2456}.
$$
Thus, writing $\iota_X$ the contraction by $X$,
$$
\iota_{f_1}(*\varphi)=f^{467} - f^{257} + f^{236} - f^{345}
$$
and
$$
\iota_{f_1}(*\varphi)\wedge \varphi= 4 f^{234567} .
$$
Similarly, we have
$$
\begin{array}{l}
\iota_{f_2}(*\varphi)= f^{157} - f^{136}+ f^{347} - f^{456}, \quad
\iota_{f_2}(*\varphi)\wedge \varphi= - 4 f^{134567} ;\\[2 pt]
\iota_{f_3}(*\varphi)= - f^{567} + f^{126} - f^{247} + f^{145}, \quad
\iota_{f_3}(*\varphi)\wedge \varphi= 4 f^{124567} ;\\[2 pt]
\iota_{f_4}(*\varphi)= - f^{167} + f^{237} - f^{135} + f^{256}, \quad
\iota_{f_4}(*\varphi)\wedge \varphi= -4 f^{123567} ;\\[2 pt]
\iota_{f_5}(*\varphi)=  f^{367} - f^{127} + f^{134} - f^{246}, \quad
\iota_{f_5}(*\varphi)\wedge \varphi= 4 f^{123467} ;\\[2 pt]
\iota_{f_6}(*\varphi)=  - f^{357} + f^{147} - f^{123} + f^{245}, \quad
\iota_{f_6}(*\varphi)\wedge \varphi= -4 f^{123457} ;\\[2 pt]
\iota_{f_7}(*\varphi)=  f^{356} - f^{146} + f^{125} - f^{234}, \quad
\iota_{f_7}(*\varphi)\wedge \varphi= 4 f^{123456}.
\end{array}
$$
In general, for $i=1,\cdots 7$, we see that
$$
\iota_{f_i}(*\varphi)\wedge \varphi= (-1)^{i+1} 4 f^{12 \cdots (i-1)(i+1)\cdots 7} ,
$$
which is a non-zero $6$-form for any $i=1, \ldots, 7$.
\end{proof}

\begin{lemma} \label{obstruccocalibrated}
Let $(\mathfrak{g}, g)$ be a  $7$-dimensional metric Lie algebra. If  for any coclosed $3$-form $\varphi$  on $\mathfrak{g}$, the differential form $\tau_3=*d\varphi|_{\Lambda^3_{27}\mathfrak{g}^*}$ satisfies the conditions
\begin{equation*}
\varphi\wedge\tau_3\neq0\\
\text{ or }\\
(*\varphi)\wedge\tau_3\neq0
\end{equation*}
 then $\frg$ does not admit a cocalibrated $G_2$-structure inducing the metric $g$.
\end{lemma}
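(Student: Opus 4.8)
The plan is to argue by contradiction and to reduce the statement to the structure of the $G_2$-invariant decomposition of $3$-forms. Suppose $\frg$ admits a cocalibrated $G_2$-structure $\varphi$ with $g_\varphi = g$. The first step is to recall that ``cocalibrated'' means exactly $d(*\varphi)=0$, which is equivalent to $\varphi$ being coclosed, i.e.\ $\delta\varphi=0$ (one has $\delta\varphi=\pm *d*\varphi$ and $*$ is invertible); hence $\varphi$ belongs to the class of coclosed $3$-forms to which the hypothesis of the lemma applies.

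The second step is to record the pointwise decomposition $\Lambda^3\frg^* = \Lambda^3_1\oplus\Lambda^3_7\oplus\Lambda^3_{27}$ into irreducible $G_2$-modules associated with $\varphi$, where $\Lambda^3_1 = \R\varphi$, $\Lambda^3_7 = \{*(\alpha\wedge\varphi)\ :\ \alpha\in\frg^*\}$, and
$$\Lambda^3_{27} = \{\,\gamma\in\Lambda^3\frg^*\ :\ \gamma\wedge\varphi = 0\ \text{ and }\ \gamma\wedge *\varphi = 0\,\}.$$
This last description (the only nontrivial input) can be checked on the standard $G_2$-form used in the proof of Lemma~\ref{obstrcocal}: one verifies $\varphi\wedge\varphi = 0$, that $(*(\alpha\wedge\varphi))\wedge *\varphi = 0$ for every $\alpha$, and that the $G_2$-equivariant map $\gamma\mapsto(\gamma\wedge\varphi,\gamma\wedge *\varphi)$ is injective on $\Lambda^3_1\oplus\Lambda^3_7$; since $35 = 1+7+27$, its kernel is precisely $\Lambda^3_{27}$, and by $G_2$-equivariance of the wedge product and of the Hodge star the same holds for any $G_2$-structure (alternatively one may quote \cite{Bryant87}). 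Because $**=\mathrm{id}$ on forms in dimension seven, projecting $*d\varphi$ onto $\Lambda^3_{27}$ produces, by construction, an element $\tau_3 = (*d\varphi)|_{\Lambda^3_{27}\frg^*}$ lying in $\Lambda^3_{27}$ — indeed, when $\varphi$ is coclosed the torsion forms satisfy $\tau_1 = \tau_2 = 0$ and $d\varphi = \tau_0 *\varphi + *\tau_3$, but even without this refinement a projection onto $\Lambda^3_{27}$ automatically lands in $\Lambda^3_{27}$. In particular $\varphi\wedge\tau_3 = 0$ and $(*\varphi)\wedge\tau_3 = 0$.

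The final step is to combine the two: the hypothesis applied to the coclosed $3$-form $\varphi$ asserts that $\varphi\wedge\tau_3\neq 0$ or $(*\varphi)\wedge\tau_3\neq 0$, contradicting the previous sentence. Hence $\frg$ admits no cocalibrated $G_2$-structure inducing $g$. I expect the only real work to be the clean statement of the characterisation of $\Lambda^3_{27}$ as the common kernel of wedging with $\varphi$ and with $*\varphi$ (together with the compatibility between the Hodge star of $g$ and that of the structure $\varphi$, which holds since $g_\varphi=g$); everything else is formal, so there is no substantial obstacle.
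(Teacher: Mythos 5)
Your proof is correct and follows essentially the same route as the paper's: both reduce the statement to the fact that $\Lambda^3_{27}\frg^*$ is exactly the common kernel of wedging with $\varphi$ and with $*\varphi$, so that $\tau_3\in\Lambda^3_{27}\frg^*$ forces $\varphi\wedge\tau_3=(*\varphi)\wedge\tau_3=0$, contradicting the hypothesis. The paper reaches $\tau_3\in\Lambda^3_{27}\frg^*$ via the intrinsic-torsion identity $d\varphi=\tau_0*\varphi+*\tau_3$ for cocalibrated structures, while you observe more directly that the projection onto $\Lambda^3_{27}\frg^*$ lands there by construction; this is an inessential difference.
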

\begin{proof}The expression of the differential and the codifferential of a $G_2$ form $\varphi$  are given in  terms of  the intrinsic torsion forms  by
\begin{align*}
d\varphi&=\tau_0*\varphi+3\tau_1\wedge\varphi+*\tau_3\\
d*\varphi&=4\tau_1\wedge*\varphi+\tau_2\wedge\varphi.
\end{align*}
with $\tau_0\in\Lambda^0\frg^*$, $\tau_1\in\Lambda^1\frg^*$, $\tau_2\in\Lambda^2_{14}\frg^*$ and $\tau_3\in\Lambda^3_{27}\frg^*$.
The cocalibrated condition $ d * \varphi =0$  implies
$$d\varphi=\tau_0*\varphi+*\tau_3.$$
Since
$$
\begin{array}{l}
\Lambda^3_{27}\frg^*=\{\rho\in\Lambda^3\frg^*|\rho\wedge\varphi=0=\rho\wedge*\varphi\},\\[3 pt]
\Lambda^4_{27}\frg^*=\{\gamma\in\Lambda^4\frg^*|\gamma\wedge\varphi=0=\gamma\wedge*\varphi\}
\end{array}
$$
it follows that
$$d\varphi\wedge\varphi=\tau_0|\varphi|^2e^{1234567}.$$
Therefore
$$\tau_3=-*(d\varphi-\tau_0*\varphi).$$
Now as $\tau_3\in \Lambda^3_{27}\frg^*$ the conditions
$$ \tau_3\wedge\varphi=0, \quad
\tau_3\wedge*\varphi=0
$$
must be fulfilled.
\end{proof}

We recall that a 5-dimensional manifold $N$ has an $SU(2)$-structure if there exists a quadruplet  $(\eta, \omega_1, \omega_2, \omega_3)$, where $\eta$ is a 1-form and $\omega_i$ are 2-forms on $N$ satisfying
$\omega_i \wedge \omega_j=\delta_{ij} v,  \, \,   v\wedge\eta\neq0$
for some nowhere vanishing 4-form $v$, and
$$\iota_{X}\omega_3=\iota_{Y}\omega_1 \implies \omega_2(X,Y)\geq0.$$

\begin{proposition} \label{5dSU(2)}
Let $(\mathfrak{g}=\mathfrak{n}\oplus \mathfrak{a},g)$ be a 7-dimensional Einstein Lie algebra of rank two and let $\{e_1, \dots, e_7\}$ be an orthonormal basis of $(\mathfrak{g},g)$ such that $\mathfrak{a}=\mathbb{R}\langle e_6, e_7\rangle$, then a $G_2$-structure $\varphi$ on $\mathfrak{g}$ induces an $SU(2)$-structure on $\frak n$ such that the associated metric h  is the restriction of $g_{\varphi}$ to $\frak n$.
\begin{proof}
By Lemma \ref{thirdpobstruction}  we know that the forms $F=\iota_{e_7}\varphi$, $\psi_+=\varphi-F \wedge e^7$ determine an $SU(3)$-structure on $\mathbb{R}\langle e_1, \dots, e_6 \rangle$ and that the associated metric is the restriction of $g$ to $\mathbb{R}\langle e_1, \dots, e_6 \rangle$.
Now we can write $F=f^{12}+f^{34}+f^5\wedge e^6$ and $\psi_++\imath\psi_-=(f^1+\imath f^2)\wedge (f^3+\imath f^4)\wedge(f^5+\imath e^6)$ where $f_i \in \mathbb{R}\langle e_1, \dots, e_5 \rangle$ and $\{f_1, \dots, f_5, e_6 \}$ is orthonormal. Then by  \cite[Proposition 1.4]{CS}  the forms
$$\eta=f^5, \hspace{0.3 cm} \omega_1=f^{12}+f^{34}, \hspace{0.3 cm} \omega_2=f^{13}+f^{42}, \hspace{0.3 cm} \omega_3=f^{14}+f^{23}$$
define an $SU(2)$-structure on $\mathfrak{n}$. The basis $\{f_1, \dots, f_5\}$ is orthonormal with respect to the metric $h$ induced by the $SU(2)$-structure. So, $h$ coincides with the restriction of $g_{\varphi}$ to $\mathfrak{n}$.
\end{proof}
\end{proposition}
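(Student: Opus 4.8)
The natural strategy is to produce the $SU(2)$-structure on $\mathfrak{n}$ as the bottom of the classical chain of structure-group reductions $G_2\supset SU(3)\supset SU(2)$, carried out on the single vector space $\mathfrak{g}$ by removing the two distinguished unit vectors $e_7$ and $e_6$ of $\mathfrak{a}$ one at a time. (Equivalently one could write the quadruplet directly in terms of the contractions $\varphi$, $\iota_{e_7}\varphi$, $\iota_{e_6}\varphi$ and $\iota_{e_6}\iota_{e_7}\varphi$, but the stepwise reduction is cleaner.) The hypothesis that $(\mathfrak{g},g)$ has rank two enters only to force $\dim\mathfrak{a}=2$, hence $\dim\mathfrak{n}=5$, which is exactly the dimension in which an $SU(2)$-structure lives. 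No differential-geometric input is needed: all of this is linear algebra on the Lie algebra with its fixed inner product, relying on Lemma \ref{thirdpobstruction} and \cite[Proposition 1.4]{CS}.

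First I would remove $e_7$. Since $(\mathfrak{g}=\mathfrak{n}\oplus\mathfrak{a},g)$ is a $7$-dimensional Einstein solvable Lie algebra (rank two, hence standard) and $e_7$ is a unit vector of $\mathfrak{a}=[\mathfrak{g},\mathfrak{g}]^{\perp}$, Lemma \ref{thirdpobstruction} applies with $A=e_7$: the forms
$$
F=\iota_{e_7}\varphi,\qquad \psi_+=\varphi-F\wedge e^7
$$
define an $SU(3)$-structure on the $6$-dimensional subspace $V=(\mathbb{R}\langle e_7\rangle)^{\perp}=\mathbb{R}\langle e_1,\dots,e_6\rangle$, whose underlying metric is $g_{\varphi}|_{V}$; in particular $F\wedge\psi_+=0$, $F^3\neq0$, and there is a companion $3$-form $\psi_-$ with $\Psi=\psi_++\imath\psi_-$ the associated complex volume form on $V$.

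Next I would remove $e_6$. Inside $V$, the vector $e_6\in\mathfrak{a}$ is a unit vector orthogonal to $\mathfrak{n}$, so $V=\mathfrak{n}\oplus\mathbb{R}\langle e_6\rangle$, and the description of an $SU(3)$-structure relative to a chosen unit vector (\cite[Proposition 1.4]{CS}) produces a $g_{\varphi}$-orthonormal coframe $\{f^1,\dots,f^5,e^6\}$ of $V$ with $f_i\in\mathfrak{n}$ such that
$$
F=f^{12}+f^{34}+f^5\wedge e^6,\qquad \psi_++\imath\psi_-=(f^1+\imath f^2)\wedge(f^3+\imath f^4)\wedge(f^5+\imath e^6).
$$
Retaining the parts not involving $e^6$, I would put
$$
\eta=f^5,\qquad \omega_1=f^{12}+f^{34},\qquad \omega_2=f^{13}+f^{42},\qquad \omega_3=f^{14}+f^{23},
$$
and verify the $SU(2)$ axioms: a direct computation gives $\omega_i\wedge\omega_j=\delta_{ij}v$ with $v=2f^{1234}$, $v\wedge\eta\neq0$, and the positivity condition $\iota_X\omega_3=\iota_Y\omega_1\Rightarrow\omega_2(X,Y)\geq0$ is part of \cite[Proposition 1.4]{CS}. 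Since $\{f_1,\dots,f_5\}$ is orthonormal both for the metric $h$ induced by $(\eta,\omega_1,\omega_2,\omega_3)$ and for $g_{\varphi}|_{\mathfrak{n}}$, we get $h=g_{\varphi}|_{\mathfrak{n}}$, which is the assertion.

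The one point that has to be checked carefully — and which I expect to be essentially the only content of the proof beyond the two citations — is the compatibility of the two orthogonal splittings: to take the coframe furnished by the $SU(3)\to SU(2)$ step with the $f_i$ actually lying in $\mathfrak{n}$, one needs $e_6$ and $e_7$ to be of unit length and mutually orthogonal for $g_{\varphi}$, with $e_6$ orthogonal to $\mathfrak{n}$ for $g_{\varphi}$. This is precisely where the hypotheses that $\{e_1,\dots,e_7\}$ is an orthonormal basis and $\mathfrak{a}=\mathbb{R}\langle e_6,e_7\rangle$ are used (and in the situations where the proposition is invoked one has $g_{\varphi}=g$, so the identifications are immediate). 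With this in hand the argument is complete, with no further computation.
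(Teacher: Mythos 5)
Your proposal is correct and follows essentially the same route as the paper: apply Lemma \ref{thirdpobstruction} with $A=e_7$ to obtain the $SU(3)$-structure $(F,\psi_+)$ on $\mathbb{R}\langle e_1,\dots,e_6\rangle$, then adapt a coframe to the unit vector $e_6$ via \cite[Proposition 1.4]{CS} to read off $(\eta,\omega_1,\omega_2,\omega_3)$ on $\mathfrak{n}$ and identify $h$ with $g_{\varphi}|_{\mathfrak{n}}$. The extra verifications you spell out (the $SU(2)$ axioms and the compatibility of the orthogonal splittings) are consistent with, and slightly more explicit than, the paper's argument.
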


\begin{corollary} \label{obstrcocal2}
Let $(\mathfrak{g}=\mathfrak{n}\oplus \mathfrak{a},g)$ be a 7-dimensional Einstein Lie algebra of rank two and let $\{e_1, \dots, e_7\}$ be an orthonormal basis such that $\mathfrak{a}=\mathbb{R}\langle e_6, e_7\rangle$. If for any coclosed 3-form $\varphi$ one of the following conditions \begin{itemize}
\item  $(\omega^2_i-\omega^2_j)\wedge\eta \neq 0$ for some $i \neq j$;
\item  $\omega_i\wedge \eta\neq\ast_{h} \omega_i $ for some $i$,
\end{itemize}
holds, where $(\omega_1, \omega_2, \omega_3, \eta, h)$ is the $SU(2)$-structure as in Proposition \ref{5dSU(2)}, then $(\frak g, g)$  does not admit any cocalibrated $G_2$-structure $\varphi$ such that $g_{\varphi}=g$.

\begin{proof}
By Proposition \ref{5dSU(2)} the $G_2$-structure induces an $SU(2)$-structure  $(\omega_1, \omega_2, \omega_3, \eta)$ on $\frak n$. By definition of $SU(2)$-structure the forms  $(\omega_1, \omega_2, \omega_3, \eta)$ have to satisfy the conditions $(\omega^2_i-\omega^2_j)\wedge\eta \neq 0$ for all $i, j$ and $\omega_i\wedge \eta=\ast_{h} \omega_i $ for all $i=1, 2, 3$.
\end{proof}
\end{corollary}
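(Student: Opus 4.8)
The plan is to deduce the corollary from Proposition \ref{5dSU(2)} by a contradiction argument, pushing the obstruction down from the $G_2$-level on $\frak g$ to the level of the induced $SU(2)$-structure on $\frak n$, where the defining relations are rigid enough to rule out the two stated alternatives. First I would suppose, for contradiction, that $(\frak g, g)$ does admit a cocalibrated $G_2$-structure $\varphi$ with $g_\varphi = g$. Being cocalibrated means $d\ast\varphi = 0$, so $\varphi$ is in particular a coclosed $3$-form on $\frak g$, and hence the hypothesis of the corollary applies to this particular $\varphi$: at least one of the two conditions on the associated $SU(2)$-forms must hold. At the same time $\varphi$ is a genuine $G_2$-structure inducing $g$, so Proposition \ref{5dSU(2)} produces an $SU(2)$-structure $(\omega_1, \omega_2, \omega_3, \eta, h)$ on $\frak n$ with $h$ equal to the restriction of $g_\varphi = g$ to $\frak n$, expressed in the adapted orthonormal coframe $\{f^1, \dots, f^5\}$ as $\eta = f^5$, $\omega_1 = f^{12} + f^{34}$, $\omega_2 = f^{13} + f^{42}$, $\omega_3 = f^{14} + f^{23}$.

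The core of the argument is then to show that, for any such induced $SU(2)$-structure, \emph{both} alternatives in the hypothesis necessarily fail. The first one is immediate from the defining relation $\omega_i \wedge \omega_j = \delta_{ij}\, v$ recalled before Proposition \ref{5dSU(2)}: it forces $\omega_i^2 = v = \omega_j^2$ for all $i, j$ with the \emph{same} volume form $v$, so $(\omega_i^2 - \omega_j^2)\wedge\eta = 0$ identically. For the second one I would compute the Hodge star of each $\omega_i$ with respect to $h$, taking $f^{12345}$ as the volume form on $\frak n$; a direct check in the adapted coframe gives $\ast_h(f^{12}) = f^{345}$, $\ast_h(f^{34}) = f^{125}$, and the analogous identities for $\omega_2, \omega_3$, yielding $\ast_h\omega_i = \omega_i\wedge\eta$ for each $i = 1, 2, 3$ (the $\omega_i$ are self-dual relative to $\eta$). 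Thus neither $(\omega_i^2 - \omega_j^2)\wedge\eta \neq 0$ nor $\omega_i\wedge\eta \neq \ast_h\omega_i$ can hold for the coclosed $3$-form $\varphi$, contradicting the hypothesis; this contradiction completes the proof.

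The argument is essentially a transcription of the $SU(2)$-structure axioms, so there is no genuinely hard step, only one point that must be handled with care: the self-duality identity $\ast_h\omega_i = \omega_i\wedge\eta$ is not stated in the recalled definition but is a consequence of the normalization $\omega_i\wedge\omega_j = \delta_{ij}v$ together with the metric and orientation determined by the $SU(2)$-structure. I would therefore fix the orientation on $\frak n$ compatibly with the one $\varphi$ induces on $\frak g$ and verify the signs of $\ast_h$ explicitly in the coframe $\{f^1,\dots,f^5\}$, after which the identity, and hence the whole corollary, follows at once.
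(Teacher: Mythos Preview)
Your argument is correct and follows essentially the same route as the paper: assume a cocalibrated $G_2$-structure exists, apply Proposition~\ref{5dSU(2)} to get an induced $SU(2)$-structure on $\mathfrak n$, and observe that the defining relations of an $SU(2)$-structure force both bullet-point conditions to fail. You are in fact more careful than the paper in that you explicitly verify the self-duality identity $\ast_h\omega_i = \omega_i\wedge\eta$ in the adapted coframe rather than simply asserting it as part of the definition (and you also avoid the sign slip in the paper's proof, which writes $(\omega_i^2-\omega_j^2)\wedge\eta \neq 0$ where $=0$ is intended).
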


We know already that a $7$-dimensional Enstein solvable Lie algebra cannot admit  nearly-parallel $G_2$-structures since the scalar curvature has to be positive. For the cocalibrated $G_2$-structures
we can prove the following

\begin{theorem}
A $7$-dimensional   (nonflat) Einstein solvmanifold   $(S, g)$   cannot  admit  any  left-invariant    cocalibrated $G_2$-structure $\varphi$ such that $g_{\varphi}= g$.

\end{theorem}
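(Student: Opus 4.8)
The plan is to run the same kind of case analysis used in Section~\ref{sectioncalibratedstruct}, but now with the cocalibrated obstructions collected at the beginning of Section~\ref{sectioncocalibratedstruct} (Lemmas~\ref{obstrcocal} and~\ref{obstruccocalibrated}, and Corollary~\ref{obstrcocal2}) playing the role that Lemma~\ref{firstobstruction} and Lemma~\ref{thirdpobstruction} played there. A $7$-dimensional Einstein (non-flat) solvable Lie algebra $(\frak s,g)$ is standard, so $\frak s=\frak n\oplus\frak a$ with $\frak n=[\frak s,\frak s]$ nilpotent and $\frak a$ abelian; I would split according to $\dim\frak a\in\{1,2,3\}$ and, within each rank, according to whether $\frak n$ is abelian or not, exactly reproducing the list of structure equations already displayed in the proof of the calibrated theorem (Table~2 and the rank-two and rank-three families coming from $\frak k_1,\dots,\frak k_8$ and from the $4$-dimensional nilradicals). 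For each such metric Lie algebra the recipe is: compute the generic closed $4$-form $\Psi\in Z^4(\frak s^*)$ (equivalently the generic coclosed $3$-form $\varphi$ with $d*\varphi=0$, using $*$ with respect to the fixed orthonormal basis $\{e_1,\dots,e_7\}$ only as a bookkeeping device), then check one of the three obstructions: either Lemma~\ref{obstrcocal} (there is $X$ with $(i_X(*\Psi))^3=0$), or Lemma~\ref{obstruccocalibrated} (the form $\tau_3=-*(d\varphi-\tau_0*\varphi)$ fails $\tau_3\wedge\varphi=0$ or $\tau_3\wedge*\varphi=0$), or, for rank two, Corollary~\ref{obstrcocal2} (the induced $SU(2)$-structure on $\frak n$ from Proposition~\ref{5dSU(2)} violates $(\omega_i^2-\omega_j^2)\wedge\eta\neq0$ or $\omega_i\wedge\eta=*_h\omega_i$). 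In each case, since the Einstein metric is unique up to scaling (Heber), it suffices to rule out a cocalibrated $\varphi$ whose $g_\varphi$ is a multiple of the identity in the given basis, so one may impose $g_\varphi(e_i,e_j)=k\,\delta_i^j$ and derive a contradiction.

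More concretely, I would first dispose of the two abelian-nilradical cases in rank one and in rank two by the same elementary degeneracy argument: for $\frak s=(ae^{17},\dots,ae^{67},0)$ the generic closed $4$-form is so constrained that $*\Psi$ always has a nonzero contraction with some $e_i$ whose cube vanishes, and similarly for the five-dimensional abelian nilradical in rank two and the four-dimensional one in rank three. For the rank-one non-abelian cases one runs through $\frak g_1,\dots,\frak g_{33}$ of Table~2 applying Lemma~\ref{obstrcocal} with a suitable coordinate vector for all but finitely many, and for the exceptional algebras $\frak g_1,\frak g_4,\frak g_9,\frak g_{18},\frak g_{28}$ — the very ones that do carry a calibrated $G_2$-structure — one computes the generic coclosed $\varphi$, forms $\tau_3$ as in Lemma~\ref{obstruccocalibrated}, and checks that $\tau_3\wedge\varphi\neq0$ or $\tau_3\wedge*\varphi\neq0$ whenever $g_\varphi$ is forced to be scalar. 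For the rank-two families coming from $\frak k_1,\dots,\frak k_8$ I would prefer Corollary~\ref{obstrcocal2}: pass to the $SU(2)$-structure $(\omega_1,\omega_2,\omega_3,\eta)$ on $\frak n$ via $F=i_{e_7}\varphi$, $\psi_+=\varphi-F\wedge e^7$, and then $\eta=f^5$, $\omega_1=f^{12}+f^{34}$, etc., and verify that at least one of the compatibility identities of an $SU(2)$-structure fails identically on the parameter space once one imposes that the induced metric $h$ on $\frak n$ is the standard one. The rank-three non-abelian case \eqref{rank3nonabel} (with its two parameter branches (i) and (ii)) and the rank-three abelian case \eqref{rank3abel} are handled by the same $\tau_3$-obstruction of Lemma~\ref{obstruccocalibrated} together with the constraints $g_\varphi(e_i,e_j)=0$ for $i\neq j$, exactly mirroring the end of the calibrated proof.

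The main obstacle, as in the calibrated theorem, is not conceptual but the bookkeeping: for each of the (many) algebras one must compute the full space of closed $4$-forms or coclosed $3$-forms, then either exhibit the right contracting vector or compute $\tau_3$ (which requires knowing $*\varphi$ with respect to $g_\varphi$, not the fixed basis metric — so one must first solve for the $g_\varphi$-orthonormal coframe $\{f^i\}$, the step that is genuinely delicate) and impose the scalar-metric condition. I expect the rank-two families $\frak k_{6,1},\frak k_{6,2},\frak k_{7,1},\dots,\frak k_{7,4}$ and the two branches of \eqref{rank3nonabel} to be the most laborious, requiring the same split into subcases $b_{12}b_{25}\neq0$, $b_{12}=0$, $b_{25}=0$ (and analogues) seen earlier; in each subcase one picks off two or three of the $\rho_{ijk}$ from the off-diagonal or diagonal equations of $g_\varphi=k\,I_7$ and shows the remaining equation forces $k=0$ or a contradiction among the $b_i$'s. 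Since no new phenomenon can appear — a cocalibrated $G_2$-structure inducing an Einstein metric on a $7$-dimensional solvmanifold would in particular give, on each $\R H\oplus\frak n$ slice, an $SU(3)$- and then $SU(2)$-structure incompatible with the rigidity of the nilsoliton data — the case check must close, and the theorem follows.
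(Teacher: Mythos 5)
Your plan is correct and follows essentially the same route as the paper: the standard decomposition $\frak s=\frak n\oplus\frak a$, a case split by rank and by whether $\frak n$ is abelian, and then Lemma~\ref{obstrcocal}, Lemma~\ref{obstruccocalibrated}, Corollary~\ref{obstrcocal2} and the scalar-matrix condition $g_\varphi=k\,I_7$ applied algebra by algebra. The only divergences are bookkeeping ones — the paper's actual partition of the rank-one list among the three obstructions differs from your guess (e.g.\ Lemma~\ref{obstrcocal} handles $\frak g_3,\frak g_{13},\frak g_{23},\frak g_{25},\dots,\frak g_{33}$, while $\frak g_1,\frak g_2,\frak g_4,\frak g_5,\frak g_6,\frak g_{20}$ get the $\tau_3$ test and the rest get Lemma~\ref{thirdpobstruction} plus the metric condition), and the rank-three cases are closed by showing $g_\varphi(e_6,e_6)=0$ rather than via $\tau_3$ — but these do not change the method.
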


\begin{proof} For a $7$-dimensional rank-one Einstein solvable Lie algebra $(\frak s, g)$  we have   the orthogonal decomposition (with respect to  the Einstein metric  $g$)
 $$
 \frak s = \frak n_6  \oplus \frak  a,
 $$
 with  $\frak n_6 = [\frak s, \frak s]$  a $6$-dimensional nilpotent  Lie algebra and $\frak a = \R \langle e_7\rangle$  abelian.

 If  $\frak n$ is abelian, then we know by \cite[Proposition 6.12]{Heber} that $\mathfrak s$ has structure equations
 $$
(a e^{17}, a e^{27}, a e^{37}, a e^{47}, a e^{57}, a  e^{67},0),
$$
where $a$ is a non-zero real number.
Computing  the generic co-closed $4$-form on $\frak s$  it easy to check that $\frak s$ cannot admit any cocalibrated $G_2$-structure  $g_{\varphi}$ such that  $g_{\varphi} = g$.

 If $\frak n_6$ is nilpotent (non-abelian), then  $(\frak s, g)$  is isometric  to one of the solvable Lie algebras  $\frak g_i, i = 1, \ldots, 33,$ in Table 2, endowed with the Riemannian metric such that the basis $\{e_1, \ldots, e_7\}$ is orthonormal.
 We may apply Lemma \ref{obstrcocal}   with $X =e_7$  to    the  Lie   algebras
 $\frak g_3$, $\frak g_{13}$, $\frak g_{23}$ and   $\frak g_{j}$, $25 \leq j \leq  33$,   showing in this way  that they  do not admit any cocalibrated $G_2$-structure $\varphi$ such that $g_{\varphi} = g$.
 For the   Lie algebras:
 $$\frak g_1, \frak g_2, \frak g_4, \frak g_5, \frak g_6, \frak g_{20}$$
  we   first determine the generic co-closed $3$-form  $\varphi$ and then,  we compute the values of $\tau_0$, and of the $3$-form $\tau_3$. We have that $\tau_3 \wedge \varphi  \neq 0$  unless $\varphi = 0$ and so by applying Lemma \ref{obstruccocalibrated} we have  that  the  Lie algebras do  not admit a cocalibrated  $G_2$-structure
inducing  an Einstein metric. For the Lie algebras
\begin{equation}\label{secondlistcocal}
\frak g_7, \frak g_8, \frak g_9, \frak g_{10}, \frak g_{12}, \frak g_{14}, \frak g_{15}, \frak g_{16}, \frak g_{17}, \frak g_{18}, \frak g_{19}, \frak g_{21}, \frak g_{22}, \frak g_{24}
\end{equation}
  we first determine the generic co-closed $3$-form  $\varphi$ and then, by applying Lemma \ref{thirdpobstruction}, we  impose, that  $\alpha \wedge \alpha \wedge \alpha \neq 0$ and $\alpha \wedge \beta =0$, where $\alpha$ and $\beta$ are given in \eqref{alphabetaexpr}.
 Moreover, we have that the  closed $3$-form $\varphi$ defines a $G_2$-structure if and only if the matrix associated to the symmetric  bilinear form $g_{\varphi}$, with respect to the orthonormal basis $\{e_1, \ldots, e_7\}$,  is positive definite.
 Since the Einstein metric is unique up to scaling,   a  calibrated $G_2$-structure  induces an Einstein metric if and only if the  matrix associated to  the  symmetric  bilinear form $g_{\varphi}$,  with respect to the basis $\{e_1,\dots,e_7\}$,  is a multiple of the identity matrix. By a direct computation we have thus that the Lie algebras
 \eqref{secondlistcocal}
 cannot admit any cocalibrated $G_2$-structures inducing an Einstein metric.

For the $7$-dimensional rank-two Einstein solvable Lie algebras, using the same notation as for the calibrated case we obtain the result for $\frak{k}_1, \frak{k}_3, \frak{k}_5, \frak{k}_{6,1}$ and $\frak{k}_{6,2} $ by the first condition of Corollary \ref{obstrcocal2}. For  the remaining Lie algebras,  i.e.,  for $\frak{k}_4, \frak{k}_{7,1}, \frak{k}_{7,2}, \frak{k}_{7,3}, \frak{k}_{7,4}, \frak{k}_8  $ and the extension of the abelian one, the result follows  by using  the second condition of Corollary \ref{obstrcocal2}.

In   the rank-three  case we have to study the extensions of the Lie algebras $\frak n_4=(0,0,e^{12},0)$ and the four-dimensional abelian Lie algebra. For the first one we consider the structure quations \eqref{rank3nonabel}, then we take a generic   3-form $\varphi$ such that all the coefficients of $e^{ijklm}$  in $d * \varphi$ vanish  except those  of  $e^{13467}$ and $e^{23467}$.  Now if we compute the  inner product  $g_{\varphi}$  induced by $\varphi$  and we impose  the conditions  $g_{\varphi}(e_i, e_i)=g_{\varphi}(e_j, e_j)$ and $g_{\varphi}(e_i, e_j)=0$ for all $i \neq j$ we obtain that $g_\varphi(e_6, e_6)=0$. For the  rank-three Einstein extension of the  abelian Lie algebra  we  consider  the  structure equations \eqref{rank3abel} and  we take a  generic coclosed 3-form $\varphi$.  By imposing  $g_{\varphi}(e_i, e_i)=g_{\varphi}(e_j, e_j)$ and $g_{\varphi}(e_i, e_j)=0$ for all $i \neq j$ we obtain $g_{\varphi}(e_6, e_6)=0$.

\end{proof}

  \newpage

\centerline{
  \begin{tabular}{|c|c|}\hline
    $\frg_1$ & {\tiny{$ \left (\frac{a}{2}e^{17}, ae^{27},
\sqrt{13}ae^{12}+\frac{3}{2}ae^{37},
4ae^{13}+2ae^{47},
 2\sqrt{3}e^{14}+2ae^{23}+\frac{5}{2}ae^{57},
-\sqrt{13}ae^{25}+2\sqrt{3} ae^{34}+\frac{7}{2}ae^{67},0 \right )$}}\\\hline
$\frg_2$ & {\tiny{$\left (-\frac{\sqrt{21}}{42}a e^{17},
-\frac{\sqrt{21}}{14}ae^{27},
 ae^{12}-2\frac{\sqrt{21}}{21}ae^{37},
-2\frac{\sqrt{3}}{3}ae^{13}-5\frac{\sqrt{21}}{42}ae^{47},
 ae^{14}-\frac{\sqrt{21}}{7}ae^{57},
 ae^{34}-ae^{25}-3\frac{\sqrt{21}}{14}ae^{67},
 0 \right)$}}\\\hline
 $\frg_3$& {\tiny{$ \left ( -\frac{\sqrt{14}}{56}a e^{17},
-9\frac{\sqrt{14}}{56}ae^{27},
ae^{12}-5\frac{\sqrt{14}}{28}ae^{37},
\frac{\sqrt{6}}{2}ae^{13}-11\frac{\sqrt{14}}{56}ae^{47},
\frac{\sqrt{6}}{2}ae^{14}-3\frac{\sqrt{14}}{14}ae^{57},
ae^{15}-13\frac{\sqrt{14}}{56}ae^{67},
0 \right) $}} \\\hline
 $\frg_4$&{\tiny{$ \left ( ae^{17},
2ae^{27},
2\sqrt{7}ae^{12}+3ae^{37},
\frac{6\sqrt{154}}{11}ae^{13}+4ae^{47},
2\sqrt{7}ae^{14}+2\frac{\sqrt{1155}}{11}ae^{23}+5ae^{57},
2\frac{\sqrt{1155}}{11}ae^{15}+5\frac{\sqrt{154}}{11}ae^{24}+6ae^{67},
0 \right )$}} \\\hline
 $\frg_5$&{\tiny {$\left ( a e^{17},
3ae^{27},
2\sqrt{14}ae^{12}+4ae^{37},
2\sqrt{15}ae^{13}+5ae^{47},
6\sqrt{2}ae^{14}+6ae^{57},
4\sqrt{2}ae^{15}+2\sqrt{15}ae^{23}+7ae^{67},
0\right)  $} } \\\hline
 $\frg_6$& {\tiny{$ \left (  \frac{a}{2} e^{17},
ae^{27},
\sqrt{10}ae^{12}+\frac{3}{2}ae^{37},
\sqrt{10}ae^{13}+2ae^{47},
\sqrt{10}ae^{23}+\frac{5}{2}ae^{57},
\sqrt{10}ae^{14}+\frac{5}{2}ae^{67},
0 \right )  $}}\\\hline
 $\frg_7$&{\tiny{$\left ( ae^{17},
ae^{27},
4ae^{12}+2ae^{37},
2\sqrt{5}ae^{13}+3ae^{47},
2\sqrt{5}ae^{23}+3ae^{57},
4ae^{14}-4ae^{24}a+4e^{67},
0  \right )$  }}\\\hline
 $\frg_8$& {\tiny{ $\left (ae^{17},
ae^{27},
4ae^{12}+2ae^{37},
2\sqrt{5}ae^{13}+3ae^{47},
2\sqrt{5}ae^{23}+3ae^{57},
4ae^{14}+4ae^{24}a+4e^{67},
0  \right) $}} \\\hline
  $\frg_9$&{\tiny {$\left (\frac{-3}{14}ae^{17},
\frac{-11}{28}ae^{27},
\frac{-3}{7}ae^{37},
\frac{\sqrt{5}}{2}ae^{12}+\frac{-17}{28}ae^{47},
ae^{14}-ae^{23}+\frac{-23}{28}ae^{57},
ae^{34}+\frac{\sqrt{5}}{2}ae^{15}-\frac{29}{28}ae^{67},
0  \right ) $}}\\\hline
 $\frg_{10}$& {\tiny {$ \left (  \frac{4}{9}ae^{17},
ae^{27},
\frac{4}{3}ae^{37},
\frac{2\sqrt{114}}{9}ae^{12}+\frac{13}{9}ae^{47},
\frac{2}{9}\sqrt{190}ae^{14}+\frac{17}{9}ae^{57},
\frac{2\sqrt{114}}{9}ae^{15}+\frac{2\sqrt{114}}{9}ae^{23}+\frac{7}{3}ae^{67},
0\right ) $ }}\\\hline
$\frg_{11}$ & {\tiny {$\left (  \frac{a}{3}e^{17},
\frac{2}{3}ae^{27},
\frac{10\sqrt{7}}{21}ae^{12}+ae^{37},
\frac{4\sqrt{42}}{21}ae^{12}+ae^{47},
\frac{4\sqrt{105}}{21}ae^{13}+\frac{2\sqrt{70}}{21}ae^{14}+\frac{4}{3}ae^{57},
\frac{2\sqrt{6}}{3}ae^{15}+\frac{2\sqrt{7}}{3}ae^{24}+\frac{5}{3}ae^{67},
0 \right )$}} \\\hline
 $\frg_{12}$& {\tiny {$\left (
 \frac{a}{2}e^{17},
ae^{27},
\frac{11}{6}ae^{37},
\frac{2\sqrt{21}}{3}ae^{12}+\frac{3}{2}ae^{47},
\frac{2\sqrt{21}}{3}ae^{14}+2ae^{57},
\frac{2\sqrt{14}}{3}ae^{15}+\frac{2\sqrt{14}}{3}ae^{24}+\frac{5}{2}ae^{67},
0 \right )$ }} \\\hline
 $\frg_{13}$& {\tiny { $ \left (
\frac{a}{9}e^{17},  ae^{27},
\frac{4}{3}ae^{37},
\frac{2\sqrt{93}}{9}ae^{12}+\frac{33}{27}ae^{47},
\frac{4\sqrt{31}}{9}ae^{14}+\frac{39}{27}ae^{57},
\frac{2\sqrt{93}}{9}ae^{15}+\frac{5}{3}e^{67},
0 \right )$ }}\\\hline
   $\frg_{14}$&{\tiny {$\left ( \frac{a}{2}e^{17},
ae^{27},
\frac{3}{4}ae^{37},
\frac{\sqrt{21}}{2}ae^{12}+\frac{3}{2}ae^{47},
\frac{\sqrt{14}}{2}ae^{13}+\frac{5}{4}ae^{57},
\frac{\sqrt{14}}{2}ae^{14}+\frac{\sqrt{21}}{2}ae^{35}+2ae^{67},
0 \right )$}} \\\hline
 $\frg_{15}$&  {\tiny{ $\left (ae^{17},
ae^{27},
ae^{37},
\sqrt{10}ae^{12}+2ae^{47},
\sqrt{10}ae^{23}+2ae^{57},
\sqrt{10}ae^{14}+\sqrt{10}ae^{35}+3ae^{67},
0 \right )$ }} \\\hline
$\frg_{16}$& {\tiny {$\left (   ae^{17},
ae^{27},
ae^{37},
\sqrt{10}ae^{12}+2ae^{47},
\sqrt{10}ae^{23}+2ae^{57},
\sqrt{10}ae^{14}-\sqrt{10}ae^{35}+3ae^{67},
0\right )$ }}\\\hline
 $\frg_{17}$& {\tiny{ $\left ( ae^{17},
ae^{27},
\frac{12}{5}ae^{37},
\frac{4}{5}\sqrt{31}ae^{12}+2ae^{47},
\frac{2}{5}\sqrt{93}ae^{14}+3ae^{57},
\frac{2}{5}\sqrt{93}ae^{24}+3ae^{67},
0 \right )$}} \\\hline
 $\frg_{18}$& {\tiny{ $\left (  ae^{17},  ae^{27},
2ae^{37},
4ae^{12}+2ae^{47},
2ae^{13}-2\sqrt{3}ae^{24}+3ae^{57},
2\sqrt{3}ae^{14}+2ae^{23}+3ae^{67},
0\right )$ }}\\\hline
$\frg_{19}$& {\tiny{$\left (  5ae^{17},
6ae^{27},
12ae^{37},
2\sqrt{134}ae^{12}+11ae^{47},
\sqrt{402}ae^{14}+16ae^{57},
\sqrt{134}ae^{13}-\sqrt{402}ae^{24}+17ae^{67},
0 \right)$}} \\\hline
 $\frg_{20}$& {\tiny{$\left (ae^{17},
ae^{27},
2ae^{12}+2ae^{37},
2\sqrt{3}ae^{12}+2ae^{47},
4ae^{14}+3ae^{57},
2ae^{24}+2\sqrt{3}ae^{23}+3ae^{67},
0 \right )$}} \\\hline
 $\frg_{21}$& {\tiny{$\left (  3ae^{17},
5ae^{27},
6ae^{37},
2\sqrt{42}ae^{12}+8ae^{47},
2\sqrt{21}ae^{13}+9ae^{57},
2\sqrt{42}ae^{14}+2\sqrt{21}ae^{23}+11ae^{67},
0 \right)$ }} \\\hline
  $\frg_{22}$& {\tiny {$\left (   6ae^{17},
5ae^{27},
9ae^{37},
2\sqrt{93}ae^{12}+11ae^{47},
2\sqrt{93}ae^{13}+15ae^{57},
4\sqrt{31}ae^{24}+16ae^{67},
0 \right )$}} \\\hline
 $\frg_{23}$& {\tiny {$\left ( ae^{17},
\frac{5}{2}ae^{27},
3ae^{37},
\sqrt{37}ae^{12}+\frac{7}{2}ae^{47},
\frac{\sqrt{74}}{2}ae^{13}+4ae^{57},
\sqrt{37}ae^{14}+\frac{9}{2}ae^{67},
0\right )$}} \\\hline
 $\frg_{24}$&{\tiny {$\left (ae^{17},
ae^{27},
ae^{37},
\sqrt{6}ae^{12}+2ae^{47},
\sqrt{6}ae^{13}+2ae^{57},
\sqrt{6}ae^{23}+2ae^{67},
0  \right )$}} \\\hline
 $\frg_{25}$& {\tiny {$\left (\frac{5\sqrt{31}}{124}ae^{17},
\frac{2\sqrt{31}}{31}ae^{27},
\frac{9\sqrt{31}}{124}ae^{37},
\frac{9\sqrt{31}}{124}ae^{47},
ae^{12}+\frac{13\sqrt{31}}{124}ae^{57},
-\frac{\sqrt{3}}{2}ae^{34}-\frac{\sqrt{3}}{2}ae^{15}+\frac{9\sqrt{31}}{62}ae^{67},
0 \right )$}} \\\hline
  $\frg_{26}$& {\tiny {$\left (  ae^{17},
2ae^{27},
3ae^{37},
3ae^{47},
4\sqrt{2}ae^{12}+3ae^{57},
4\sqrt{2}ae^{15}+4ae^{67},
0
 \right )$}} \\\hline
   $\frg_{27}$&{\tiny{$\left ( ae^{17},
\frac{3}{4}ae^{27},
\frac{7}{4}ae^{37},
\frac{3}{2}ae^{47},
\frac{\sqrt{148}}{4}ae^{12}+\frac{7}{4}ae^{57},
\frac{\sqrt{74}}{4}ae^{14}+\frac{\sqrt{37}}{2}ae^{25}+\frac{5}{2}ae^{67},
0 \right )$ }} \\\hline
 $\frg_{28}$&{\tiny {$\left ( ae^{17},
ae^{27},
ae^{37},
ae^{47},
2ae^{13}-2ae^{24}+2ae^{57},
2ae^{14}+2ae^{23}+2ae^{67},
0 \right )$}} \\\hline
 $\frg_{29}$& {\tiny {$\left ( ae^{17},
ae^{27},
\frac{4}{3}ae^{37},
\frac{4}{3}ae^{47},
\sqrt{6}ae^{12}+2ae^{57},
\sqrt{6}ae^{14}+\sqrt{6}ae^{23}+\sqrt{7}{3}ae^{67},
0\right )$}} \\\hline
  $\frg_{30}$&{\tiny{ $\left (  \frac{a}{2}e^{17},
\frac{a}{2}e^{27},
\frac{a}{2}e^{37},
\frac{a}{2}e^{47},
\sqrt{2}ae^{12}+ae^{57},
\sqrt{2}ae^{34}+ae^{67},
0\right )$}} \\\hline
 $\frg_{31}$& {\tiny {$\left ( \frac{\sqrt{11}}{11}ae^{17},
\frac{3\sqrt{11}}{22}ae^{27},
\frac{3\sqrt{11}}{22}ae^{37},
\frac{2\sqrt{11}}{11}ae^{47},
\frac{5\sqrt{11}}{22}ae^{57},
ae^{13}+\frac{5\sqrt{11}}{22}ae^{67},
0 \right )$}} \\\hline
 $\frg_{32}$&{\tiny {$\left (  \frac{a}{2}e^{17},
\frac{a}{2}e^{27},
\frac{a}{2}e^{37},
\frac{a}{2}e^{47},
\frac{2}{3}ae^{57},
\frac{\sqrt{11}}{3}ae^{12}+\frac{\sqrt{11}}{3}ae^{34}+ae^{67},
0\right )$}} \\\hline
$\frg_{33}$& {\tiny {$\left ( \frac{a}{2}e^{17},
\frac{a}{2}e^{27},
\frac{3}{4}ae^{37},
\frac{3}{4}ae^{47},
\frac{3}{4}ae^{57},
\frac{\sqrt{34}}{4}ae^{12}+ae^{67},
0 \right )$}} \\\hline
  \end{tabular}
  }

  \smallskip

\centerline{{\bf Table 2.} Rank-one  Einstein  $7$-dimensional  solvable Lie algebras}

\newpage

 \centerline{
  \begin{tabular}{|c|c|}\hline
$\frak s_7$ &  Calibrated $G_2$-structure \\\hline
    $\frg_1$ & \tiny{$\varphi=\frac{1}{432} \left(1440-128 \sqrt{3}\right) e^{123}+\frac{\sqrt{13}}{2}e^{125}-\frac{\left(13312 \sqrt{3}-748800\right)}{44928 \sqrt{3}} e^{127}$}\\
    &\tiny{$+\frac{8}{9} e^{135}-2 e^{137}-\frac{1}{\sqrt{3}}e^{146}-\sqrt{3} e^{147}+10 e^{157}-e^{167}-\frac{1}{3}e^{236}+e^{237}$}\\
   &\tiny{$+\frac{1}{576} \left(1440+128 \sqrt{3}\right)e^{247}+\frac{1}{\sqrt{3}}e^{267}+\frac{1}{\sqrt{3}}e^{345}-e^{357}-e^{457}+e^{567}$}\\\hline
   $\frg_4$ & \small{$\varphi=-\frac{7}{2 \sqrt{5}} e^{125}+e^{137}-\frac{7}{13} e^{146}-e^{147}+\frac{1}{2}e^{167}+\frac{7}{13}e^{236}-e^{237}$}\\
   & \small{$+2e^{247}-e^{267}+\frac{7}{13}e^{345}+\frac{1}{2}e^{357}-e^{457}-e^{567}$} \\\hline
   $\frg_9$ & \small{$\varphi=-\frac{7}{2 \sqrt{5}} e^{125}+e^{137}-\frac{7}{13} e^{146}-e^{147}+\frac{1}{2}e^{167}+\frac{7}{13} e^{236}$}\\
   & \small{$-e^{237}+2 e^{247}-e^{267}+\frac{7}{13}e^{345}+\frac{1}{2}e^{357}-e^{457}-e^{567}$}\\\hline
   $\frg_{18}$ & \small{$\varphi=e^{123}-e^{127}-e^{136}+\sqrt{3} e^{145}$}\\
   &\small{$+3 e^{167}+e^{235}+\sqrt{3} e^{246}-\frac{1}{2}e^{347}+3 e^{567}$}\\\hline
      $\frg_{28}$ & \small{$\varphi=-2 e^{127}-2 e^{347}-e^{136}+e^{145}+e^{235}+e^{246}+2 e^{567}$} \\\hline
    \end{tabular}
  }

  \medskip

  \medskip

  \medskip

\centerline{{\bf Table  3.} Calibrated $G_2$-structures on rank-one Einstein solvable Lie algebras.}

\medskip

\bigskip

\vskip.3cm

\noindent {\bf Acknowledgments.} We are very grateful to the referee for useful comments
that helped to improve the paper. This work has been partially supported
through Project MICINN (Spain) MTM2011-28326-C02-02,
Project UPV/EHU ref.\ UFI11/52 and by  GNSAGA of INdAM.

\smallskip

\medskip

\small\noindent Universidad del Pa\'{\i}s Vasco, Facultad de Ciencia y Tecnolog\'{\i}a, Departamento de Matem\'aticas,
Apartado 644, 48080 Bilbao, Spain. \\
\texttt{marisa.fernandez@ehu.es}\\
\texttt{victormanuel.manero@ehu.es}

\medskip

\small\noindent Dipartimento di Matematica, Universit\`a di
Torino, Via Carlo Alberto 10, Torino, Italy.\\
\texttt{annamaria.fino@unito.it}

\end{document}